\providecommand{\algorithmname}{Algorithm}
\DeclareMathOperator*{\argmax}{argmax}
\DeclareMathOperator*{\argmin}{argmin}
\DeclareMathOperator*{\Tr}{Tr}
\numberwithin{equation}{section}
\theoremstyle{plain}
\newtheorem{thm}{Theorem}[section]
\newtheorem{lemma}{Lemma}[section]
\newtheorem{definition}{Definition}
\newtheorem{result}{Result}
\theoremstyle{remark}
\newtheorem{rem}{Remark}[section]
\global\long\def\ba{\bm{\alpha}}
\global\long\def\bb{\bm{\beta}}
\global\long\def\bt{\bm{\theta}}
\global\long\def\bz{\bm{z}}
\global\long\def\bl{\bm{\lambda}}
\global\long\def\ba{\bm{\alpha}}
\global\long\def\bb{\bm{\beta}}
\global\long\def\be{\bm{\eta}}
\global\long\def\I{\mathds{1}}
\global\long\def\R{\mathbb{R}}
\global\long\def\bL{\mathbf{L}}
\global\long\def\bM{\mathbf{M}}
\global\long\def\bI{\mathbf{I}}
\global\long\def\bU{\mathbf{U}}
\global\long\def\bR{\mathbf{R}}
\global\long\def\nus{\nu^{\star}}
\global\long\def\psib{\bar{\psi}}
\global\long\def\bK{\mathbf{K}}
\global\long\def\bY{\mathbf{Y}}
\global\long\def\diag{\mathrm{diag}}
\global\long\def\X{\mathcal{X}}
\global\long\def\Y{\mathcal{Y}}
\global\long\def\I{\mathbb{I}}
\newcommand{\vY}{\mathbf{Y}}
\newcommand\numberthis{\addtocounter{equation}{1}\tag{\theequation}}
\begin{document}
\sloppy

\begin{frontmatter}

\title{Asymptotic Equivalence of Fixed-size and Varying-size Determinantal Point Processes}

\begin{aug}
  \author{\fnms{Simon} \snm{Barthelm\'e} \thanksref{a} \corref{} \ead[label=e1]{simon.barthelme@gipsa-lab.fr} }
  \author{ \fnms{Pierre-Olivier} \snm{Amblard} \thanksref{a} \ead[label=e2]{pierre-olivier.amblard@gipsa-lab.fr}  }
  \and
  \author{ \fnms{Nicolas} \snm{Tremblay} \thanksref{a}  \ead[label=e3]{nicolas.tremblay@gipsa-lab.fr}}
\address[a]{CNRS, Gipsa-lab, Grenoble INP and Universit\'e Grenoble Alpes. 
11 rue des Math\'ematiques 
Grenoble Campus
BP46
F - 38402 SAINT MARTIN D'HERES Cedex
FRANCE \\
\printead{e1} \printead{e2,e3} }
\runauthor{Barthelm\'e et al.}

\affiliation{CNRS, Gipsa-lab}

\end{aug}

\begin{abstract}
 \hspace{0.1cm} Determinantal Point Processes (DPPs) are popular models for point processes with repulsion. They appear in numerous contexts, from physics to graph theory, and display appealing theoretical properties. On the more practical side of things, since DPPs tend to select sets of points that are some distance apart (repulsion), they have been advocated as a way of producing random subsets with high diversity.
  DPPs come in two variants: fixed-size and varying-size. A sample from a varying-size DPP is a subset of random cardinality, while in fixed-size ``$k$-DPPs'' the cardinality is fixed. The latter makes more sense in many applications, but unfortunately their computational properties are less attractive, since, among other things, inclusion probabilities are harder to compute.
  In this work we show that as the size of the ground set grows, $k$-DPPs and DPPs become equivalent, in the sense that fixed-order inclusion probabilities converge. As a by-product, we obtain saddlepoint formulas for inclusion probabilities in $k$-DPPs. These turn out to be extremely accurate, and suffer less from numerical difficulties than exact methods do.
  Our results also suggest that $k$-DPPs and DPPs also have equivalent maximum likelihood estimators.  Finally, we obtain results on asymptotic approximations of  elementary symmetric polynomials which may be of independent interest. 
\end{abstract}

\begin{keyword}
  \kwd{point processes}
  \kwd{determinantal point processes}
  \kwd{saddlepoint approximation}
\end{keyword}

\maketitle

\end{frontmatter}

Determinantal Point Processes originally arose in quantum physics \citep{Macchi:CoincidenceApproach} and random matrix theory \citep{Soshnikov:DeterminantalRandomPointFields}, but they are such natural objects that they have also been rediscovered within computer science \citep{Deshpande:MatrixApproxVolSampling, Deshpande:EffVolSampl} and that special cases have appeared in the statistics literature as well \citep{Chen:WeightedFinitePopSampling}. Within Machine Learning, their current popularity owes much to \cite{KuleszaTaskar:DPPsforML}, whose overall approach we will mostly follow here. Like them, we focus on discrete DPPs.

\cite{KuleszaTaskar:DPPsforML} advocate DPPs as tractable probabilistic models for diverse subsets. Specifically, we assume that we have a ground set of $n$ items, $\Omega = { x_{1} \ldots x_{n} }$, of which we wish to retain a subset $\X \subseteq \Omega$. Our requirement is that $\X$ be diverse, i.e., that it should not contain items that are too much alike, or, put differently, that it be representative of the range of items found in $\Omega$. A DPP is essentially a way of picking a random  $\X$ that has this property with high probability.

We introduce DPPs formally below, but a salient feature of classical DPPs is that the cardinal of $\X$ is a random variable. Since this is not always suitable, \cite{KuleszaTaskar:FixedSizeDPPs} have introduced a fixed-size variant (so-called $k$-DPPs), which are nothing more than DPPs conditioned on the event that $|\X| = k$. $k$-DPPs share some features with DPPs but unfortunately lose some tractability.

In this work, we show that this loss of tractability only matters for very small $n$. In large sets, $k$-DPPs and DPPs converge in a sense we make precise below, but roughly means that the probability that item $x_{i}$ ends up in set $\X$ is almost the same under a $k$-DPP and a matched DPP. Moreover, this is true for bi-inclusions (i.e., the event that $x_{i}$ and $x_{j}$ are in $\X$) or indeed for  joint inclusion probabilities of any fixed order \footnote{To be precise: $k$-DPPs and DPPs cannot be equivalent in a strong sense, since they do not have the same support (one generates a fixed size set, the other doesn't). However, for $n$ and $k$ large enough, the probability that they include a certain fixed subset converges.} . 

Practically speaking, the ability to compute inclusion probabilities is essential when $k$-DPPs are used for importance sampling. For example, in \citep{Tremblay:DPPforCoresets}, $k$-DPPs are used to estimate averages: let $L = \sum_{i=1}^n f(x_i)$. If $\X$ is sampled from a $k$-DPP, the average $L$ can be estimated from the values of $f$ in $\X$. Since not all items have equal probability of appearing in a $k$-DPP, we have to reweight by the inverse inclusion probability to form the unbiased estimate: 
\begin{equation}
  \label{eq:importance_sampling}
\hat{L}(\X) = \sum_{i = 1 }^n  \frac{f(x_i) \I(i \in \X)}{p(i \in \X)}
\end{equation}
Here we therefore need first-order inclusion probabilities. To estimate a pairwise quantity (e.g., mean distance), we would need second-order inclusion probabilities, and so on.

Our results lead to stable and accurate approximations for inclusion probabilities, as described in section \ref{sec:alg-results}, and stable algorithms for sampling $k$-DPPs with relatively large $k$. They also clarify the links between $k$-DPPs and DPPs, and when the one should look like the other. 

The article is structured as follows: in section \ref{sec:background}, we introduce notation and recall results on DPPs and $k$-DPPs. Section \ref{sec:asymptotic-eq} contains our main theoretical results. The practical algorithms that follow are described in section \ref{sec:alg-results}. Section \ref{sec:empirical_results} contains simulation results. 

To prove our main result we use saddlepoint approximations and a perturbation argument, but readers who wish to skip the technical details will find an intuitive argument in section \ref{sec:some-intuition}, where we explain that DPPs are just exponentially relaxed $k$-DPPs. Essentially, the strict constraint $|X| = k$ that appears in $k$-DPPs is relaxed to a soft constraint in DPPs, and the difference between the soft and the hard constraint becomes irrelevant in large $n$. 

\section{Background}
\label{sec:background}
In this section we introduce notation and some basic results. 

\subsection{Notation}
\label{sec:notation}

We deal with finite ground sets, so without loss of generality we may take $\Omega =  \{1, \ldots, n\}$. Fixed subsets of $\Omega$ are then equivalent to multi-indices and noted $\ba$, with cardinality noted $|\ba|$. Random subsets are noted $\X$ or $\Y$.  Expectation is noted $E( . )$, and $\I$ is the indicator function, so that e.g., $E\left(\I(i \in \X) \right)=p(i \in \X)$.
There are two equivalent viewpoints when dealing with finite random subsets: one is to look at $\X$, a subset, as the random variable. Another is to consider binary strings of length $n$, which indicate whether item $i$ is included in $\X$. We note such strings $\bz$, and depending on context one or the other viewpoint is more convenient.
Matrices are in bold capitals, e.g., $\bL$. The identity matrix is noted $\bI$. Individual entries in a matrix are noted using capitals: $L_{ij}$ is entry $(i,j)$ in matrix $\bL$. Sub-matrices are in bold, with indices, for example $\bL_{\ba,\bb}$ is the sub-matrix of $\bL$ with rows indexed by $\ba$ and columns indexed by $\bb$. So-called ``Matlab'' notation is used occasionally, so that the submatrix formed by selecting all rows in $\ba$ is noted $\bL_{\ba,:}$, and $\bL_{:,1:k}$ is the submatrix containing the first $k$ columns. For simplicity, a single index is used if it is repeated: $\bL_{\ba}=\bL_{\ba,\ba}$. Sub-matrices and sub-vectors formed by excluding elements are noted with a minus sign, e.g.,  the index $ \ba_{-j}$ includes all elements in $\ba$ except index $j$.

\subsection{Some lemmas}
\label{sec:lemmas}

We will need two well-known lemmas in the course of this work. The first one (Cauchy-Binet) is central to the theory of DPPs, the second is an easy lemma on inclusion probabilities.

The Cauchy-Binet lemma expresses the determinant of a matrix product as a sum of products of determinants:
\begin{lemma}[Cauchy-Binet]
\label{lm:cauchy-binet}
  Let $\mathbf{M} = \mathbf{A} \mathbf{B}$, with $\mathbf{A}$ a $n \times m$ matrix, $\mathbf{B}$ a $m \times n$ matrix. We assume $m \geq n$. Then:
  \begin{equation}
    \label{eq:CauchyBinet}
    \det \mathbf{M} = \sum_{\ba, |\ba| = n} \det \mathbf{A}_{:,\ba} \det \mathbf{B_{\ba,:}}
  \end{equation}
where $\ba$ is a multi-index of length $n$. The sum is over all multi-indices $\ba$, of which there are $m \choose n$.
\end{lemma}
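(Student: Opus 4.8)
The plan is to give the standard multilinear-expansion argument, which is completely elementary. Start from the Leibniz formula $\det \mathbf{M} = \sum_{\sigma \in S_n} \mathrm{sgn}(\sigma) \prod_{i=1}^n M_{i,\sigma(i)}$ and substitute the entrywise identity $M_{i,j} = \sum_{k=1}^m A_{i,k} B_{k,j}$ into every factor. Expanding the resulting product of sums amounts to choosing one index $f(i) \in \{1,\dots,m\}$ per factor, i.e.\ summing over all functions $f:\{1,\dots,n\}\to\{1,\dots,m\}$, which gives
\[
  \det \mathbf{M} \;=\; \sum_{f} \Big( \prod_{i=1}^n A_{i,f(i)} \Big) \sum_{\sigma \in S_n} \mathrm{sgn}(\sigma) \prod_{i=1}^n B_{f(i),\sigma(i)}.
\]

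The next step is to recognize the inner $\sigma$-sum as the determinant of the $n\times n$ matrix whose $i$-th row is row $f(i)$ of $\mathbf{B}$. If $f$ is not injective, that matrix has two equal rows and the inner sum vanishes, so only injective $f$ contribute. An injective $f$ is equivalent to the data of an $n$-subset $\ba=\{a_1<\dots<a_n\}\subseteq\{1,\dots,m\}$ (its image) together with the permutation $\tau\in S_n$ determined by $f(i)=a_{\tau(i)}$. Reindexing the sum over injective $f$ as a sum over pairs $(\ba,\tau)$ and substituting $\sigma = \pi\circ\tau$ in the inner sum, one factor of $\mathrm{sgn}(\tau)$ comes out and combines with $\prod_i A_{i,a_{\tau(i)}}$: the $\tau$-sum of $\mathrm{sgn}(\tau)\prod_i A_{i,a_{\tau(i)}}$ is exactly the Leibniz expansion of $\det \mathbf{A}_{:,\ba}$, while the remaining $\pi$-sum is $\det \mathbf{B}_{\ba,:}$. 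This yields $\det \mathbf{M}=\sum_{\ba,\,|\ba|=n}\det \mathbf{A}_{:,\ba}\det \mathbf{B}_{\ba,:}$. The hypothesis $m\geq n$ enters only in that an injective $f$ exists at all; for $m<n$ both sides are zero.

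The only place where care is needed is the sign bookkeeping in this reindexing — tracking how $\mathrm{sgn}(\tau)$ splits between the column-permutation of $\mathbf{A}$ and the row-permutation of $\mathbf{B}$ — but this is routine once the substitution $\sigma=\pi\circ\tau$ is made. An alternative that hides the combinatorics is the block-matrix trick: compute $\det\!\begin{pmatrix}\bI_m & -\mathbf{B}\\ \mathbf{A} & \mathbf{0}\end{pmatrix}$ in two ways, once via the block factorization $\begin{pmatrix}\bI_m & \mathbf{0}\\ \mathbf{A} & \bI_n\end{pmatrix}\begin{pmatrix}\bI_m & -\mathbf{B}\\ \mathbf{0} & \mathbf{A}\mathbf{B}\end{pmatrix}$, which gives $\det(\mathbf{A}\mathbf{B})$, and once by the generalized Laplace expansion along the last $n$ rows $[\mathbf{A}\mid\mathbf{0}]$, whose only nonvanishing complementary-minor pairs are indexed by the $n$-subsets $\ba\subseteq\{1,\dots,m\}$ and contribute $\det \mathbf{A}_{:,\ba}\det \mathbf{B}_{\ba,:}$ after the signs cancel. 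I would present the first proof and simply remark that the second is available.
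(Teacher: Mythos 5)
Your proof is correct: the Leibniz-expansion argument, the vanishing of non-injective index choices, and the sign bookkeeping via the factorization $f(i)=a_{\tau(i)}$ are all handled properly, and the block-matrix alternative you sketch is also valid. Note, however, that the paper offers no proof of this lemma to compare against --- it is stated as a classical, well-known result (Cauchy--Binet) and simply cited as such --- so your write-up stands on its own as a standard and complete derivation.
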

  
The second lemma is an easy lemma on sums of inclusion probabilities. An inclusion probability is the probability that a certain item (or items) appear in a random set.

\begin{lemma}[Sums of inclusion probabilities]
\label{lm:sums-inclusion-prob}

  Let $\Omega$ designate a base set of items, and $\X$ a random subset of $\Omega$. Let $\ba$ designate a fixed subset of items of cardinality $m$. $p(\ba \subseteq \X)$ is called an inclusion probability. 
  We have that: $\sum_{\ba, |\ba| = m} p(\ba \subseteq \X) = E \left(  {|\X| \choose m } \right) $, where the expectation is over the random set $\X$. In particular:
  \begin{enumerate}
  \item if $\X$ is a set of fixed size $k$, the sum equals $k \choose m$.
  \item if $m = 1$, the sum equals $E(|\X|)$
  \end{enumerate}
\end{lemma}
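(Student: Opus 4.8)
The plan is to use the standard indicator-function / Fubini (linearity of expectation) trick, which makes this essentially a counting identity. First I would rewrite each inclusion probability as an expectation of an indicator, $p(\ba \subseteq \X) = E\bigl(\I(\ba \subseteq \X)\bigr)$, and then exchange the (finite) sum over multi-indices with the expectation:
\begin{equation*}
  \sum_{\ba,\, |\ba| = m} p(\ba \subseteq \X) \;=\; \sum_{\ba,\, |\ba| = m} E\bigl(\I(\ba \subseteq \X)\bigr) \;=\; E\!\left( \sum_{\ba,\, |\ba| = m} \I(\ba \subseteq \X) \right).
\end{equation*}
This exchange is justified with no integrability concerns since $\Omega$ is finite and the sum has finitely many terms.

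Next I would evaluate the inner random sum pointwise. For a fixed realization of $\X$, the quantity $\sum_{\ba,\, |\ba| = m} \I(\ba \subseteq \X)$ simply counts the number of $m$-element subsets $\ba$ of $\Omega$ that are contained in $\X$; since every such $\ba$ must be a subset of $\X$ itself, this count is exactly $\binom{|\X|}{m}$ (with the usual convention that $\binom{j}{m} = 0$ whenever $j < m$, which handles realizations with $|\X| < m$ automatically). Substituting back gives $\sum_{\ba,\,|\ba|=m} p(\ba \subseteq \X) = E\bigl(\binom{|\X|}{m}\bigr)$, which is the main claim.

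Finally the two special cases are immediate. If $\X$ has fixed size $k$, then $\binom{|\X|}{m} = \binom{k}{m}$ is deterministic, so the expectation equals $\binom{k}{m}$. If $m = 1$, then $\binom{|\X|}{1} = |\X|$, so the sum equals $E(|\X|)$. I do not anticipate any real obstacle here; the only point requiring a word of care is the convention $\binom{j}{m}=0$ for $j<m$, so that the pointwise identity for the inner sum holds on every realization of $\X$, including those of cardinality less than $m$.
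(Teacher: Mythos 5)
Your proof is correct and follows exactly the same route as the paper's: write each inclusion probability as the expectation of an indicator, exchange the finite sum with the expectation, and count the $m$-subsets of $\X$ pointwise to get $\binom{|\X|}{m}$. Your added remark about the convention $\binom{j}{m}=0$ for $j<m$ is a small but welcome bit of extra care that the paper leaves implicit.
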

\begin{proof}
  \begin{align*}
    \sum_{\ba, |\ba| = m} p(\ba \subseteq \X) &= \sum_{\ba, |\ba| = m} E \left( \I (\ba \subseteq \X) \right) \\
                                        &=  E  \left( \sum_{\ba, |\ba| = m} \I (\ba \subseteq \X) \right) \\ 
&= E \left(  {|\X| \choose m } \right)
  \end{align*}

\end{proof}

\begin{rem}
  For large sets, $E \left(  {|\X| \choose m } \right) = \frac{1}{m!} E \left( |\X| (|\X| - 1) \ldots (|\X| - m + 1) \right) = \frac{1}{m!} E(O(|\X|^{m}))$
  As a consequence, the sum of order-$m$ inclusion probabilities for a set of fixed size $k$ is $O(\frac{k^{m}}{m!})$. We use this fact to properly normalise the total variation distance, see section \ref{sec:asymp-eq}.
\end{rem}

% Next, we state a version of Scheffe's lemma:

% \begin{lemma}[Scheffe's lemma]
% \label{lm:scheffe}
%   Let $p$ designate a fixed density with respect to some measure $\mu$ (e.g., the counting measure). Let ${ q_{n} }$ designate a sequence of measures converging pointwise to $p$ everywhere (with respect to $\mu$). Then $q_{n}$ converges in total variation to $p$ if and only if $\int q_{n} d \mu = \int p d \mu$.
% \end{lemma}
  
% Scheffe's lemma, along with lemma \ref{lm:sums-inclusion-prob}, will be used to prove convergence in total variation. 
\subsection{Elementary symmetric polynomials}
\label{sec:ESPs}

The Elementary Symmetric Polynomials (ESPs) of a matrix play an important role in the theory of $k$-DPPs, and one of our core problems will be to find asymptotic formulas for them. Let $\bL$ denote a positive definite matrix and $\lambda_{1} \ldots \lambda_{n}$ its eigenvalues.
The $k$-th ESP is a sum of all the products of $k$ eigenvalues:
\begin{equation}
  \label{eq:ESP}
  e_k(\bl) = \sum_{ \ba, |\ba| = k } \prod_{j \in \ba } \lambda_{j}
\end{equation}
For example, $e_{2}(\bl) = \sum_{i <j} \lambda_{i}\lambda_{j}$. Interesting special cases include $e_{1}(\bl) = \sum \lambda_{i} = \text{Tr}(\bL)$, and $e_{n}(\bl) = \prod \lambda_{i} = \det \bL$.
There is a rich theory on ESPs, going back at least to Newton, with interesting modern developments \citep{MarietSra:ESPsOptimalExpDesign, JozsaMitchinson:SymmPolyInfTheory}. As we explain below, they occur in $k$-DPPs as normalisation constants, and ratios of ESPs appear in inclusion probabilities. 

\subsection{DPPs}
\label{sec:DPPs}

DPPs are defined such as to produce random subsets that are not overly redundant, where the notion of redundancy is defined with respect to a (positive definite) similarity function.

We have a collection $\Omega$ of items ordered from 1 to $n$. We associate to each pair of items a similarity score $L_{ij}$, such that the matrix $\bL$ with entries $L_{ij}$ is positive definite. The matrix $\bL \in \R^{n \times n}$ is called the L-ensemble of the DPP \footnote{We find it more natural to define DPPs via the L-ensemble, since the more common definition via the marginal kernel does not carry over to fixed-size DPPs.}.
\begin{definition}
A Determinantal Point Process is a random subset  $\X$ of $1 \ldots n$ with probability mass function given by:
\begin{equation}
  \label{eq:dpp_pmf}
  p(\X) = \frac{ \det(\bL_{\X})  }{ \det( \mathbf{I} + \bL ) } 
\end{equation}
\end{definition}
The preference for diverse subsets built into DPPs comes from the fact that if a subset $\X$ includes items that are too similar, the matrix $\bL_{\X}$ will have nearly colinear columns, and its determinant will be close to 0. 

An interesting aspect of DPPs is how tractable the marginals are. The inclusion probabilities, i.e.,  the probability that item $i$ is in $\X$, are given by the so-called ``marginal kernel'' matrix $\bK\in \R^{n \times n}$, where
\begin{equation}
  \label{eq:dpp_kernel}
  \bK = (\mathbf{I} + \bL)^{-1}\bL
\end{equation}
Specifically, for a DPP,  $p(i \in \X) = K_{ii} $.
More generally, inclusion probabilities are given by principal minors of the marginal kernel, e.g., if $\ba$ is a subset of $\Omega$:
\begin{equation}
  \label{eq:trace-kernel}
  p(\ba \subseteq  \X) = \det (\bK_{\ba})
 \end{equation}

A DPP can generate random subsets of any size from 1 to $n$. The expected cardinality of $\X$ can also be read out from the marginal kernel, specifically:
\begin{equation}
  \label{eq:trace-kernel}
  E(|\X|) = \text{Tr} (\bK) = \sum \frac{\lambda_{i}}{1+\lambda_{i}}
  \end{equation}
where the $\lambda_{i}$'s designate the eigenvalues of the L-ensemble $\bL$. 
  
\subsection{$k$-DPPs}
\label{sec:kDPPs}

\begin{definition}
A $k$-DPP is a DPP conditioned on the size of the sampled set $|\X| = k$. In other words, the probability mass function stays the same but now the sample space is the set of subsets of $1\ldots n$ of size $k$, and
\begin{equation}
    \label{eq:kdpp_pmf}
  p (\X\big\vert | \X| = k) \propto  \begin{cases}
    \det(\bL_{\X}) & \mathrm{if \ } \vert \X \vert = k \\
    0 &  \mathrm{otherwise}
    \end{cases}
  \end{equation}
\end{definition}
\begin{rem}
Contrary to DPPs, $k$-DPPs are insensitive to the overall scaling of the L-ensemble. Since $$ \det(\beta\bL_{\X}) = \beta^k \det(\bL_{\X}), $$ the probability density (\ref{eq:kdpp_pmf}) is invariant to any rescaling by a factor $\beta > 0$. 
\end{rem}
An important property of $k$-DPPs, one that unlocks many analytical simplifications, is that $k$-DPPs are a mixture distribution. The mixture involves a diagonal $k$-DPP and a projection $k$-DPP, two objects that are simpler than a generic $k$-DPP.

The mixture property is a consequence of the Cauchy-Binet formula (lemma \ref{lm:cauchy-binet}). Let $\bL = \mathbf{UDU}^\top$ denote the spectral decomposition of $\bL$, with $\mathbf{D} = \mathrm{diag}(\lambda_{1} \ldots \lambda_{n})$ and $\mathbf{U}$ the matrix of eigenvectors. Then
\begin{equation}
  \label{eq:cauchy-binet-mixture}
   p\left(\X \big\vert | \X| = k\right) = \frac{1}{Z} \det(\bL_\X)= \frac{1}{Z} \sum_{\Y,|\Y|=k} \det(\mathbf{U}_{\X,\Y} \mathbf{U}_{\X,\Y}^\top)\det(\mathbf{D}_{\Y} )
\end{equation}
where $Z$ is an integration constant (to be defined later), $\Y$ is a subset of \emph{columns} of $\mathbf{U}$, and the sum is over all such subsets of size $k$.
Equation \eqref{eq:cauchy-binet-mixture} shows that  the probability mass function has the form of a mixture distribution, where we first choose a set of \emph{eigenvalues} (with indices $\Y$) from a $k$-DPP with  \emph{diagonal} L-ensemble $\mathbf{D}$ and then choose a set of items $\X$ from a $k$-DPP with L-ensemble $\mathbf{U}_{:,\Y} \mathbf{U}_{:,\Y}^\top$. The latter is a specific kind of DPP, called a ``projection DPP'' . 

The same mixture interpretation holds for DPPs as well. In the case of DPPs, the rule for sampling the set $\Y$ of eigenvalues is simpler. Each eigenvalue is sampled independently and included with probability $\frac{\lambda_{i}}{1+\lambda_{i}}$. Once we have the eigenvalues, we proceed in exactly the same way as above: form a projection kernel, and sample the corresponding projection DPP.

\subsubsection{Projection DPPs}
\label{sec:projection_dpp}

\begin{definition}
A projection DPP is a $k$-DPP whose L-ensemble has the following form:
\begin{equation}
  \label{eq:proj_dpp}
  \mathbf{L} = \mathbf{VV}^\top
\end{equation}
where $\mathbf{V}_{n \times k}$ has orthonormal columns (i.e.,  $\mathbf{V}^\top\mathbf{V} = \mathbf{I}$).
\end{definition}

Projection DPPs have a set of properties that make them especially tractable. The most salient is that the marginal kernel equals the L-ensemble, e.g., the inclusion probability of item $i$ equals $L_{ii}$, as shown in the following lemma.

\begin{lemma}
  \label{lm:marginal-kernel-dpp}
  In a projection DPP with L-ensemble  $ \mathbf{L} $, $p \left(\ba \subseteq \X \right) = \det(\bL_\X)$. 
\end{lemma}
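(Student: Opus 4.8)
The plan is to reduce the statement to the identity $\sum_{\X\supseteq\ba,\,|\X|=k}\det\bL_\X=\det\bL_\ba$, and to prove that by peeling off the block $\bL_\ba$ with a Schur complement while exploiting the fact that $\bL=\mathbf{V}\mathbf{V}^\top$ is an orthogonal projection. (Here, and in the lemma, the right-hand side should read $\det\bL_\ba$, matching $p(\ba\subseteq\X)=\det\bK_\ba$ for general DPPs with $\bK=\bL$.)

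\emph{Normalisation.} First I would note that, since $\mathbf{V}^\top\mathbf{V}=\bI$, every column of $\mathbf{V}$ is an eigenvector of $\bL$ with eigenvalue $1$ while the orthogonal complement of the column span of $\mathbf{V}$ is $\ker\bL$; hence $\bL$ has $k$ eigenvalues equal to $1$ and $n-k$ equal to $0$, so $e_k(\bl)=1$. Consequently the normalising constant in \eqref{eq:kdpp_pmf} is $Z=\sum_{|\X|=k}\det\bL_\X=e_k(\bl)=1$ (the middle equality is the usual consequence of Lemma~\ref{lm:cauchy-binet}; equivalently, for $|\X|=k$ one has $\det\bL_\X=(\det\mathbf{V}_{\X,:})^2$, and Cauchy--Binet gives $\sum_{|\X|=k}(\det\mathbf{V}_{\X,:})^2=\det(\mathbf{V}^\top\mathbf{V})=1$). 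Thus $p(\X)=\det\bL_\X$ on sets of size $k$, and summing atomic masses yields $p(\ba\subseteq\X)=\sum_{\X\supseteq\ba,\,|\X|=k}\det\bL_\X$. Writing $m=|\ba|$, we may assume $m\le k$, since otherwise $p(\ba\subseteq\X)=0$ and also $\det\bL_\ba=0$ because $\mathrm{rank}\,\bL=k<m$.

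\emph{Degenerate case.} If $\det\bL_\ba=0$ then $\mathbf{V}_{\ba,:}$ has linearly dependent rows, hence so does every $k\times k$ matrix $\mathbf{V}_{\X,:}$ with $\X\supseteq\ba$; therefore $\det\bL_\X=(\det\mathbf{V}_{\X,:})^2=0$ for all such $\X$, and the sum is $0=\det\bL_\ba$.

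\emph{Main case.} Assume $\det\bL_\ba\neq0$. For $\X\supseteq\ba$, the Schur determinant identity gives $\det\bL_\X=\det\bL_\ba\cdot\det\!\bigl(\mathbf{S}_{\X\setminus\ba}\bigr)$, where $\mathbf{S}=\bL_{\mathbf{c}}-\bL_{\mathbf{c},\ba}\bL_\ba^{-1}\bL_{\ba,\mathbf{c}}$ with $\mathbf{c}=\Omega\setminus\ba$ is the $(n-m)\times(n-m)$ Schur complement, and the block of $\bL_\X$ that we quotient out reappears as the principal submatrix $\mathbf{S}_{\X\setminus\ba}$ by the quotient property. Letting $\X\setminus\ba$ range over the size-$(k-m)$ subsets of $\mathbf{c}$ and using that the sum of all size-$j$ principal minors of a matrix equals the $j$-th elementary symmetric polynomial of its eigenvalues, I obtain
\[
  p(\ba\subseteq\X)=\det\bL_\ba\cdot e_{k-m}\bigl(\text{eigenvalues of }\mathbf{S}\bigr),
\]
so it remains to show $e_{k-m}(\text{eigenvalues of }\mathbf{S})=1$. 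I would do this by proving that $\mathbf{S}$ is again an orthogonal projection of rank $k-m$: partitioning $\mathbf{V}$ into its rows indexed by $\ba$, written $\mathbf{V}_1=\mathbf{V}_{\ba,:}$ ($m\times k$), and those indexed by $\mathbf{c}$, written $\mathbf{V}_2=\mathbf{V}_{\mathbf{c},:}$, and using $\mathbf{V}_2^\top\mathbf{V}_2=\bI-\mathbf{V}_1^\top\mathbf{V}_1$, a short computation gives $\mathbf{S}=\mathbf{V}_2(\bI-\mathbf{P}_1)\mathbf{V}_2^\top$ with $\mathbf{P}_1=\mathbf{V}_1^\top(\mathbf{V}_1\mathbf{V}_1^\top)^{-1}\mathbf{V}_1$ the orthogonal projector onto the row space of $\mathbf{V}_1$. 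Since $\mathbf{V}_1(\bI-\mathbf{P}_1)=0$, one checks $\mathbf{S}^2=\mathbf{S}=\mathbf{S}^\top$ and $\Tr\mathbf{S}=\Tr(\bI-\mathbf{P}_1)=k-m$; hence $\mathbf{S}$ has $k-m$ unit eigenvalues and the rest zero, so $e_{k-m}(\text{eigenvalues of }\mathbf{S})=1$ and $p(\ba\subseteq\X)=\det\bL_\ba$.

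\emph{Main obstacle.} The only non-routine step is the last one — checking that the Schur complement of a principal block of the projection $\mathbf{V}\mathbf{V}^\top$ is itself (the L-ensemble of) a projection; the rest is bookkeeping. A continuity/perturbation argument in $\bL$ is tempting but would break the projection structure, so the explicit split into the degenerate and non-degenerate cases seems the cleaner route.
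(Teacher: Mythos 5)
Your proof is correct, but it is organised quite differently from the paper's. The paper works with ordered $k$-tuples and marginalises the free coordinates one at a time: each step applies the bordered-determinant identity (a rank-one Schur complement) and then uses the projection structure through the trace identity $\sum_{i}\bigl(L_{ii}-\bL_{i,\X}\bL_{\X}^{-1}\bL_{\X,i}\bigr)=k-|\X|$, so that the recursion produces the factor $(k-m)!$ which cancels against the ordering count $k!/(k-m)!$. You instead do the reduction in a single block step: $\det\bL_{\X}=\det\bL_{\ba}\det\mathbf{S}_{\X\setminus\ba}$ via the quotient property, then sum the principal minors of the Schur complement $\mathbf{S}$ to get $e_{k-m}$ of its spectrum, and close the argument by the (correct, and nicely self-contained) observation that $\mathbf{S}=\mathbf{V}_2(\bI-\mathbf{P}_1)\mathbf{V}_2^{\top}$ is again a symmetric idempotent of trace $k-m$, whence $e_{k-m}=1$. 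The two arguments use the same two ingredients --- Schur complements and the projection property --- but yours avoids the detour through ordered sets and the factorial bookkeeping, and it makes explicit the structural fact that the Schur complement of a projection's principal block is itself a projection of the expected rank; it also handles the degenerate case $\det\bL_{\ba}=0$ explicitly, which the paper's recursion passes over silently (its intermediate expressions involve $\bL_{\{\ba,\mathbf{z}\}}^{-1}$ without comment on invertibility). You are also right that the lemma's displayed conclusion should read $\det(\bL_{\ba})$ rather than $\det(\bL_{\X})$: that is what the paper's own appendix actually proves in eq.~(\ref{eq:marginal-ker-proj}).
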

\begin{proof}
  See appendix. 
\end{proof}

This result is proved rigorously in the appendix, but straightforward if one looks at projection DPPs as DPPs taken to a certain limit. Consider a DPP with the following L-matrix, indexed by parameter $ \gamma > 0 $:
\begin{equation}
  \label{eq:dpp-limit-pddp}
  \bL(\gamma) = \bR \mathbf{D(\gamma)} \bR^\top 
\end{equation}
where $D(\gamma)$ is a diagonal matrix with entries on the diagonal equal to $\gamma$ repeated $k$ times, followed by $\gamma^{-1}$, repeated $n-k$ times, and $\bR$ is a $n \times n$ orthonormal matrix.
Let $\gamma \rightarrow \infty$. Following the mixture interpretation of DPPs, we see that the probability of picking one of the first $k$ eigenvalues equals $\gamma/(1+\gamma)$, which tends to 1, while the probability of picking one of the latter $n-k$ tends to 0. This means that with increasing $\gamma$ we end up always picking the same $k$ eigenvalues, and hence always sampling the same $k$-DPP, one with kernel $\bR_{:,1:k}\bR_{:,1:k}^\top$. The marginal probabilities are given by the corresponding marginal kernel:
$ \bR \mathbf{D_{m}(\gamma) } \bR^\top $
where $D_{m} (\gamma)$ has first $k$ entries equal to $ \frac{\gamma}{1+\gamma}$, and the next $n-k$ equal to $\frac{1}{\gamma+1}$. In the large-$\gamma$ limit, the marginal kernel thus equals $\bR_{:,1:k}\bR_{:,1:k}^\top$ as claimed. The limit is however improper, as some entries in the L-matrix tend to infinity.

To sum up: if the L-ensemble is a projection matrix of rank $k$, then a $k$-DPP is also a DPP. We can even extend this further to \emph{all} L-ensembles of rank $k$.

\begin{result}
  \label{result:max-rank-dpp}
  Let $\bL$ have rank $k$, with eigendecomposition $\bL = \bU \mathbf{D} \bU^{\top}$. Without loss of generality, we assume that $\bU$ is of size $n \times k$ and $\mathbf{D}$ a diagonal matrix of size $k \times k$ with non-null diagonal elements. Then a $k$-DPP with L-ensemble $\bL$ is also a projection DPP, with marginal kernel equal to $\bU \bU^{\top}$. 
\end{result}
\begin{proof}
  $\bL$ has rank $k$, so in the eigendecomposition $\bU$ is $n \times k$, and $\mathbf{D}$ is a diagonal matrix of size $k \times k$. If $\X$ is a subset of size $k$, we have
  \[ \det \bL_{\X} = \det \bU_{\X,:} \mathbf{D} \bU_{:,\X}^{\top} \]
  and since the matrices involved are square, we have:
  \[ \det \bL_{\X} = \det D \left( \det \bU_{\X,:}\bU_{:,\X}^{\top} \right) \]
  Then $p(\X) \propto \left( \det \bU_{\X,:}\bU_{:,\X}^{\top} \right) $, which is the probability mass function of a projection DPP and the result follows.
\end{proof}

This result hints at a close kinship between $k$-DPPs and DPPs, and convergence results bear this out. 

\subsubsection{Inclusion probabilities in $k$-DPPs}
\label{sec:inclusion_kDPPs}

Since a $k$-DPP is a mixture of projection-DPPs (eq. \ref{eq:cauchy-binet-mixture}), the first order inclusion probability for item $i$ can be expressed as 
\begin{align}
  p\left(i \in \X \vert | \X| = k\right) &= E_\Y( (\mathbf{U}_\Y \mathbf{U}_\Y^\top )_{ii}) \\
                                         &= E_\Y( \sum_{j=1}^n U_{ij}^2  \mathrm{I}(j \in \Y)) \\
                                         &= \sum_{j=1}^n U_{ij}^2  P(j \in \Y) \\
                                         &= (\mathbf{U}\mathrm{diag}(\bm{\pi})\mathbf{U}^\top)_{ii}
\end{align}
where $\pi_j = p(j \in \Y)$, the probability that the $j$-th eigenvector is included in set $\Y$. Formulas for higher-orders (joint inclusion probabilities) are in section \ref{sec:reduction_diag_proof}. 

Computing the inclusion probabilities for a $k$-DPP thus boils down to computing inclusion probabilities in a \emph{diagonal} $k$-DPP, and combining them with the eigenvectors of $\bL$. 

\subsection{Diagonal DPPs and $k$-DPPs}
\label{diag-DPPs}

In the special case of diagonal DPPs and $k$-DPPs, the L-ensemble is a diagonal matrix. A diagonal DPP turns out to be nothing more than a Bernoulli process. If conditioned to be of fixed size $k$, a diagonal $k$-DPP is obtained.

So far we have kept with the usual viewpoint on DPPs, which sees them as random sets. Alternatively, a sample from a discrete DPP can be viewed as a binary string $\bz$ of size $n$, where $z_i = 1$ indicates inclusion of the $i$-th item, and $\sum_{i=1}^n z_i = k$. In this section we prefer the latter viewpoint, because it lightens notation. 

In this notation the inclusion probability of item $i$ equals the marginal probability of $z_{i}$, $p(z_{i}=1)$, and similarly for joint probabilities $p(z_{i}=1,z_{j}=1)$, etc. $p(\bz) = p(z_{1}\ldots z_{n})$ is the likelihood of the draw.

\subsubsection{Diagonal DPPs}

Consider a DPP with diagonal L-ensemble
\begin{equation*}
  \bL = \diag ( \lambda_1, \ldots, \lambda_n) 
\end{equation*}
Following eq. (\ref{eq:dpp_kernel}), $\bK$ is diagonal too, with entries $K_{ii} = \pi_{i} =  \frac{\lambda_{i}}{1+\lambda_{i}}$. The fact that the marginal kernel is diagonal implies that
$p(z_{i}=1,z_{j}=1) = \det(\bK_{\left\{ i,j \right\}}) = \pi_{i}\pi_{j} = p(z_{i}=1)p(z_{j}=1)$, with similar results for higher-order probabilities. We conclude that (viewed as a binary string) a diagonal DPP is a product of independent Bernoulli variables, where each $z_{i}$ is drawn with probability $\pi_{i}$. 

\subsubsection{Diagonal $k$-DPPs}

Viewed as distributions over binary strings, diagonal DPPs are a product measure, meaning that each $z_{i}$ is sampled independently. Diagonal $k$-DPPs are not, due to the constraint that $\sum z_{i} = k$. 
The density of a diagonal $k$-DPP is given by: 
\begin{equation}
  \label{eq:prob-density-dkDPP}
  p(\bz) =   \frac{\prod_{j =1 }^{n} \lambda_{j} ^{z_{j}}}{Z}   \I (\sum z_{i} = k) 
\end{equation}
The integration constant $Z$ is given by the $k$'th elementary symmetric polynomial (ESP)
\begin{equation}
  \label{eq:ESP}
  Z = e_k(\bl) = \sum_{ \ba } \prod_{j \in \ba } \lambda_{j}
\end{equation}
where $\ba$ is a multi-index of size $k$.
At this stage, it may be hard to see what sort of probability distribution eq. (\ref{eq:prob-density-dkDPP}) defines. Indeed, it is not obvious how to sample from such a distribution, and the algorithm given in \cite{KuleszaTaskar:DPPsforML} is not trivial. We return to the issue in section \ref{sec:sampling-diag}. 

Inclusion probabilities can be computed through direct summation. 
\begin{align}
  \label{eq:inclusion_prob_diag}
  p(z_{i}=1) &= \sum_{\bz_{-i}} p(z_{i}=1,\bz_{-i})
               = \frac{\lambda_i  \sum_{ { |\ba| = k-1 , \ba \cap \left\{ i \right\} = \emptyset} } \prod_{j \in \alpha } \lambda_{j} }{e_k(\bl)} \\
  &= \frac{\lambda_ie_{k-1}(\bl_{-i})}{e_k(\bl)} 
  \end{align}
Computing such quantities in practice is again not completely trivial, although \citep{KuleszaTaskar:DPPsforML} gives an algorithm. We include a fairly accurate approximation below, and due to numerical instabilities in the exact algorithm, we advocate using the approximation in most cases (Section \ref{sec:empirical_results}). 

\section{Asymptotic equivalence of $k$-DPPs and DPPs}
\label{sec:asymptotic-eq}

Before stating our main results formally, we give an intuitive argument as to why $k$-DPPs and DPPs may resemble one another. 

\subsection{Some intuition}
\label{sec:some-intuition}

Readers familiar with statistical physics will know of a class of results known as ``equivalence of ensembles'' \citep{Touchette:EquivAndNonequiv}. These results justify formally a mathematical subterfuge, whereby a probability distribution that incorporates a hard constraint (the ``micro-canonical ensemble'') can be replaced with a more tractable variant (the ``canonical ensemble''), where the hard constraint is turned into a soft constraint. Our result is a variant of this particular scenario. 

We rewrite the likelihood of a $k$-DPP as the likelihood of a DPP times a hard constraint:
\begin{equation*}
  p(\X) \propto \left( \det \bL_{\X} \right) \I ( |\X| = k)
\end{equation*}
Deploy now the usual trick of turning the hard constraint into a soft constraint via an exponential, defining a new distribution:
\begin{equation}
  \label{eq:$k$-DPP-softened}
  q(\X) \propto \left( \det \bL_{\X} \right) \exp (  \nu |\X|)
\end{equation}
where $\nu$ should be set so that $|\X|=k$ on average over $q$, i.e.,  $E_{q}(|\X|) = k$. Before we find such a value, it helps to recognise that $q$  actually has the form of a DPP: since $\det(\beta \bL_{\X}) = \beta^{|\X]} \det \bL_{X}$, we have
\begin{equation}
  \label{eq:k-DPP-softened}
  q(\X) \propto  \det \left( \exp(\nu) \bL_{\X} \right) 
\end{equation}
and we identify $q$ as a DPP with L-ensemble $\exp(\nu) \bL_{\X}$.
Using eq. (\ref{eq:trace-kernel}), we find that:
\begin{equation}
  \label{eq:nu_implicit}
E_q(|\X|) = \exp(\nu) \Tr \left( (\exp(\nu) \bL + \bI)^{-1} \bL \right) 
\end{equation}
The appropriate value for $\nu$ is determined by the implicit equation that $E_q(|\X|) = k$. In terms of the eigenvalues, this reads: 
\begin{equation}
  \label{eq:nu_implicit}
  \sum_i \frac{\lambda_i e^\nu}{1+\lambda_ie^\nu} = k
\end{equation}

To sum up, this development suggests that a $k$-DPP with ensemble $\bL$ can be approximated by a (tilted) DPP with L-ensemble $\exp(\nu) \bL$, with $\nu$ set so that the matched DPP has $k$ elements on average. The next section gives a rigorous statement for this approximation. 

\subsection{Main result}
\label{sec:asymp-eq}

Under certain conditions, DPPs and $k$-DPPs are equivalent in a regime where we pick a fixed ratio of items from a growing set, i.e.,  $\frac{k}{n}=r > 0$, fixed as $n \rightarrow \infty $. By equivalence, we mean that they have the same marginals (inclusion probabilities of order 1 and above). The conditions for equivalence boil down to the \emph{number of degrees of freedom of $\bL$ being high enough}, and we make that condition more precise below. In practice the approximations we derive give excellent results in most settings we have tried, except with very small values of $n$ (less than 10, say). 

We require assumptions on the L-ensembles: let $\bL_{1} \ldots \bL_{n}$ denote a sequence of positive definite matrices of increasing size $n \times n$. The assumption is that  $\Tr \left( (\bL_{n} + \bI)^{-2}\bL_{n}  \right) $ diverges. The question of which sequences of matrices verify this condition is left to section \ref{sec:which_matrices}.

We associate with each $\bL_{n}$ a $k$-DPP $\X_n$, where $k = \lfloor rn \rfloor$, a fixed fraction of the number of items. Similarly, we have a second sequence of \emph{matched} DPPs $\tilde{\X}_{n}$ with L-ensemble $\exp(\nu_{n})\bL_{n}$, where $\nu_{n}$ verifies eq. (\ref{eq:nu_implicit}). 
Let $\ba$ denote a multi-index of fixed finite size $m < k$, and $\pi_{n}(\ba)$ the probability that $\ba \subseteq \X_{n}$, and $\tilde{\pi}_{n}(\ba)$ the corresponding probability for $\tilde{\X_{n}}$. We may interpret $\pi$ and $\tilde{\pi}$ as two measures over $\ba$, and an appropriate means of comparing these quantities is via total variation. Because $\pi$ and $\tilde{\pi}$ have total mass that grows with $k$ (see lemma \ref{lm:sums-inclusion-prob}), we normalise the total variation distance with the appropriate factor. 

\begin{definition}
  Let $\pi$, $\tilde{\pi}$ designate two inclusion measures of order $m \geq 1$, corresponding to inclusion probabilities in point processes with $n$ elements. We define their total variation distance as: 
  \begin{equation}
    \label{eq:total-variation-def}
    D_{m}(\pi,\tilde{\pi}) = {k \choose m}^{-1} \sum_{\ba, |\ba| = m} \left| \pi( \ba) - \tilde{\pi} (\ba) \right| 
  \end{equation}
\end{definition}

We have the following result:

\begin{thm}
  \label{thm:main-result}
  Under the assumptions above, joint inclusion probabilities under a $k$-DPP and its matched DPP converge: 
  \begin{equation}
    \label{eq:lim-total-var}
 %   \lim_{n \rightarrow \infty} 
    D_m(\pi_{n},\tilde{\pi}_{n}) = O(n^{-1}) \mbox{ as } n \rightarrow \infty
  \end{equation}
\end{thm}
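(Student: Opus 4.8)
The plan is to reduce everything to the diagonal case and then run a saddlepoint argument on the relevant elementary symmetric polynomials. By the mixture representation (\ref{eq:cauchy-binet-mixture}) and the formula for inclusion probabilities in terms of the eigenvectors $\mathbf{U}$ of $\bL_n$, a joint inclusion probability $\pi_n(\ba)$ for a $k$-DPP is a fixed polynomial in the entries of $\mathbf{U}$ with coefficients that are inclusion probabilities $P(\Y \supseteq \bb)$ of a \emph{diagonal} $k$-DPP with weights $\lambda_1,\dots,\lambda_n$; the matched DPP $\tilde\X_n$ admits the same expansion but with the diagonal $k$-DPP replaced by the independent Bernoulli process with success probabilities $\tilde\pi_i = \lambda_i e^{\nu_n}/(1+\lambda_i e^{\nu_n})$. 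Since the entries of $\mathbf{U}$ are bounded ($\sum_i U_{ij}^2 = 1$), it suffices to show that the diagonal-level joint inclusion probabilities of order $\le m$ agree up to $O(n^{-1})$, \emph{uniformly}, and then sum the errors against the bounded eigenvector weights, using Lemma~\ref{lm:sums-inclusion-prob} and the remark after it to see that the $\binom{k}{m}^{-1}$ normalisation in $D_m$ is exactly what absorbs the combinatorial growth of the number of multi-indices.

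So the core is the diagonal statement: for a diagonal $k$-DPP with weights $\lambda_i$ and its matched Bernoulli process, $\bigl| p_{k\text{-DPP}}(\bb \subseteq \Y) - \prod_{j\in\bb}\tilde\pi_j \bigr| = O(n^{-1})$ uniformly in $\bb$ with $|\bb|=m$ fixed. Writing (\ref{eq:inclusion_prob_diag}) in the form
\begin{equation*}
  p_{k\text{-DPP}}(\bb\subseteq\Y) = \Bigl(\prod_{j\in\bb}\lambda_j\Bigr)\frac{e_{k-m}(\bl_{-\bb})}{e_k(\bl)},
\end{equation*}
the problem becomes an asymptotic analysis of ratios of ESPs. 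The standard device is the integral representation $e_k(\bl) = \frac{1}{2\pi i}\oint z^{-k-1}\prod_i (1+\lambda_i z)\,\mathrm{d}z$; applying the saddlepoint method, the saddle $z^\star$ is precisely the solution of $\sum_i \lambda_i z^\star/(1+\lambda_i z^\star) = k$, i.e. $z^\star = e^{\nu_n}$, which is why the matched DPP is the right comparison object. The leading saddlepoint term gives exactly $\prod_{j\in\bb}\tilde\pi_j$ after the $\prod_j\lambda_j$ factor and the shift from $\bl$ to $\bl_{-\bb}$ (a rank-$m$, $O(1)$ perturbation of both the exponent and the order) are accounted for; the correction is governed by the inverse of the second derivative of the log-integrand at the saddle, which is $\sigma_n^2 := \sum_i (\lambda_i z^\star)/(1+\lambda_i z^\star)^2 = \Tr\bigl((\bL_n e^{\nu_n} + \bI)^{-2}\bL_n e^{\nu_n}\bigr)$. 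This is the quantity assumed to diverge, and one checks that it diverges at rate $\Theta(n)$ under the fixed-ratio regime $k/n = r$, so the relative error is $O(\sigma_n^{-1}) = O(n^{-1})$; the perturbation from deleting the $m$ indices in $\bb$ is a lower-order effect since $m$ is fixed.

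The main obstacle is making the saddlepoint expansion rigorous and, crucially, \emph{uniform} in the multi-index $\bb$: one needs control on the contour integral's error term that does not degrade as $\bb$ ranges over all $\binom{n}{m}$ subsets, and one must verify the saddle stays in a good region (bounded away from $0$ and from the poles $-1/\lambda_i$) uniformly, which is where the assumption $\Tr((\bL_n+\bI)^{-2}\bL_n)\to\infty$ — equivalently a lower bound on the number of ``effective degrees of freedom'' of $\bL_n$ — does the real work: it prevents the spectral mass from concentrating on a few eigenvalues, which would make the Gaussian approximation at the saddle fail. A clean way to organise this is to prove a general lemma (of independent interest, as the abstract promises) giving $e_k(\bl) = \frac{(z^\star)^{-k}\prod_i(1+\lambda_i z^\star)}{\sqrt{2\pi\sigma_n^2}}\bigl(1 + O(\sigma_n^{-1})\bigr)$ with the $O$ uniform over the stated class, apply it to numerator and denominator, and then the ratio and the sum over $\ba$ are routine. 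I would also separately handle the eigenvalue randomness in the full (non-diagonal) DPP: the eigenvalue-selection set $\Y$ for the matched DPP is a Bernoulli process whose agreement with the diagonal $k$-DPP is exactly the content of the diagonal lemma, so no extra argument beyond bounded-weight summation is needed there.
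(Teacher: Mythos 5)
Your proposal follows essentially the same route as the paper: reduction to the diagonal case via the Cauchy--Binet/mixture structure (the paper's Lemma \ref{lm:diag-reduction}, whose key point is that the eigenvector weights $\det \bU_{\ba\bb}\bU_{\ba\bb}^{\top}$ are nonnegative and sum to one over $\ba$), expression of diagonal joint inclusion probabilities as ratios of ESPs as in \eqref{eq:inclusion-prob-diag}, and a saddlepoint approximation whose saddle $z^{\star}=e^{\nu_n}$ identifies the matched DPP --- your contour-integral formulation is the same approximation the paper derives probabilistically (Daniels applied to the sum of Bernoullis with parameters $\lambda_i/(1+\lambda_i)$), and your proposed uniform ESP lemma is exactly the paper's saddlepoint formula for $e_k(\bl)$, with the shift from $\bl$ to $\bl_{-\ba}$ handled in the paper by an explicit perturbation expansion in $\epsilon=n^{-1}$. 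One bookkeeping caveat: the saddlepoint relative error is governed by $O\bigl(\psi''(\nu^{\star})^{-1}\bigr)=O(\sigma_n^{-2})$, not $O(\sigma_n^{-1})$ (the latter would only yield $O(n^{-1/2})$ when $\sigma_n^{2}=\Theta(n)$), and the stated hypothesis only guarantees that the degrees of freedom diverge, so the $\Theta(n)$ rate you invoke is an extra implicit assumption --- one the paper itself also makes tacitly.
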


\begin{rem}
Note that in our proof we have $k = O(n)$, which is needed because of a Central Limit argument implicit in the saddlepoint expansion.
  
\end{rem}
\begin{rem}
A quantity of interest in many calculations are sample averages of the form $A(\X) = \frac{1}{m} \sum_{i \in \X} f_{i} $. Then $E_{\X}(A) =   \frac{1}{m} \sum_{j \in \Omega} \pi(j) f_{j}$. An easy corollary is that $\left| E_{\X}(A) - E_{\tilde{\X}}(A) \right| \rightarrow 0$, from well-known properties of the total variation distance \citep{DasGupta:AsymptoticTheoryStatsProb}.  
\end{rem}

The overall proof path for theorem \ref{thm:main-result} is as follows:
\begin{enumerate}
\item We reduce the equivalence of $k$-DPPs and DPPs to the equivalence of \emph{diagonal} $k$-DPPs and DPPs (section \ref{sec:reduction_diag})
\item Elementary symmetric polynomials (and ratios thereof) hold the key to the next step, and we show how they can be approximated using a saddlepoint approximation (section \ref{sec:saddlepoint-ESPs})
\item We insert the asymptotic series for ESPs into the formula for inclusion probabilities, and derive the $O(1)$ and $O(n^{-1})$ terms. The $O(1)$ term corresponds to inclusion probabilities in the matched DPP, from which Theorem \ref{thm:main-result} follows (section \ref{sec:inclusion-prob-lemma}).
\end{enumerate}

\subsubsection{Reduction to diagonal DPPs}
\label{sec:reduction_diag}

Recall (section \ref{sec:kDPPs}) that DPPs and $k$-DPPs are both mixture distributions, where we first draw a set of eigenvectors of $\bL$, and then draw from a projection DPP formed from these eigenvectors. That second step is the same in DPPs and $k$-DPPs, only the first step differs. In DPPs, we draw from a diagonal DPP, while in $k$-DPPs we draw from a diagonal $k$-DPP. Heuristically, because it is only the first step that differs, we can focus on our asymptotic study on the first step. 

Formally if we can establish that the inclusion probabilities in \emph{diagonal} $k$-DPPs and DPPs converge (at any finite order), then the inclusion probabilities in \emph{general} $k$-DPPs and DPPs converge as well (up to the same order). We note $\Y$ and $\tilde{\Y}$ the diagonal DPPs associated with $\X$ and $\tilde{\X}$. The order-$m$ inclusion measures for $\X$ and $\tilde{\X}$ are noted $\pi_{m}$ and $\tilde{\pi}_{m}$, while the corresponding measures for $\Y$ and $\tilde{\Y}$ are noted $\rho_{m}$ and $\tilde{\rho}_{m}$ (the latter correspond to the probability that certain eigenvectors are included, as per the mixture interpretation of DPPs introduced in section \ref{sec:projection_dpp}). 

The following lemma states the result:
\begin{lemma}
  \label{lm:diag-reduction}
  $D_{m}(\pi_{m},\tilde{\pi}_{m}) \leq  D_{m}(\rho_{m},\tilde{\rho}_{m})$
\end{lemma}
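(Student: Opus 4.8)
The plan is to exploit the mixture representation \eqref{eq:cauchy-binet-mixture}: a $k$-DPP (resp.\ its matched DPP) first draws a set $\Y$ (resp.\ $\tilde\Y$) of eigenvector indices from a diagonal $k$-DPP (resp.\ diagonal DPP), then draws items from the projection DPP built from those eigenvectors. Writing $\bL = \bU\mathbf{D}\bU^\top$, the matched DPP has L-ensemble $e^{\nu_n}\bL = \bU(e^{\nu_n}\mathbf{D})\bU^\top$, hence the \emph{same} eigenvectors $\bU$, only rescaled eigenvalues. So the entire discrepancy between $\X$ and $\tilde\X$ is carried by the discrepancy between $\Y$ and $\tilde\Y$, and the task is to show that pushing that discrepancy through the common ``projection'' step does not amplify it in the relevant $\ell^1$ sense.

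First I would obtain a closed form for order-$m$ joint inclusion probabilities of a general $k$-DPP. Conditioning on $\Y$, the set $\X$ is a projection DPP with kernel $\bU_{:,\Y}\bU_{:,\Y}^\top$, so Lemma \ref{lm:marginal-kernel-dpp} gives $p(\ba \subseteq \X \mid \Y) = \det(\bU_{\ba,\Y}\bU_{\ba,\Y}^\top)$, and Cauchy-Binet (Lemma \ref{lm:cauchy-binet}, applicable since $|\Y| = k \geq m$) expands this as $\sum_{\bb \subseteq \Y,\,|\bb|=m}(\det\bU_{\ba,\bb})^2$. Taking expectation over $\Y$ and inserting the indicator $\I(\bb\subseteq\Y)$ yields
\[
  \pi_m(\ba) = \sum_{\bb,\,|\bb|=m}(\det\bU_{\ba,\bb})^2\,\rho_m(\bb),
\]
and identically $\tilde\pi_m(\ba) = \sum_{\bb,\,|\bb|=m}(\det\bU_{\ba,\bb})^2\,\tilde\rho_m(\bb)$ with the same matrix $\bU$, so that
\[
  \pi_m(\ba) - \tilde\pi_m(\ba) = \sum_{\bb,\,|\bb|=m}(\det\bU_{\ba,\bb})^2\,\big(\rho_m(\bb) - \tilde\rho_m(\bb)\big).
\]

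Then I would sum $|\cdot|$ over all $\ba$ with $|\ba|=m$, apply the triangle inequality, exchange the two sums, and use the orthonormality of $\bU$ through Cauchy-Binet once more: for each fixed $\bb$ with $|\bb| = m$,
\[
  \sum_{\ba,\,|\ba|=m}(\det\bU_{\ba,\bb})^2 = \det\big(\bU_{:,\bb}^\top\bU_{:,\bb}\big) = \det(\bI) = 1.
\]
This gives $\sum_{\ba}|\pi_m(\ba)-\tilde\pi_m(\ba)| \leq \sum_{\bb}|\rho_m(\bb)-\tilde\rho_m(\bb)|$, and dividing through by ${k\choose m}$ — the same normalising constant appears on both sides, since $|\Y| = k$ and $E(|\tilde\Y|)=k$ — delivers $D_m(\pi_m,\tilde\pi_m)\leq D_m(\rho_m,\tilde\rho_m)$.

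I expect the main obstacle to be conceptual rather than computational: recognising that the linear map $\rho_m \mapsto \pi_m$ has coefficients $(\det\bU_{\ba,\bb})^2$ forming a column-stochastic array, so that converting eigenvector-inclusion probabilities into item-inclusion probabilities is a genuine $\ell^1$ contraction and no mass is lost or amplified. Once the identity $\sum_\ba(\det\bU_{\ba,\bb})^2 = 1$ is in hand, everything else is routine manipulation of Cauchy-Binet expansions; the one point that needs care is keeping track of the range of the inner multi-index (it runs over $m$-subsets of $\Y$ inside the conditional expectation, and over all $m$-subsets of $\Omega$ once the indicator $\I(\bb\subseteq\Y)$ is introduced).
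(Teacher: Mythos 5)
Your proposal is correct and follows essentially the same route as the paper's proof: both rest on the identity $\pi_m(\ba)=\sum_{\bb,|\bb|=m}(\det\bU_{\ba\bb})^2\,\rho_m(\bb)$ (the paper derives it by writing $\pi_m(\ba)=E[\det(\bU_{\ba,:}\mathbf{Y}\bU_{:,\ba}^\top)]$ with $\mathbf{Y}$ the diagonal indicator matrix and applying Cauchy--Binet, you by conditioning on $\Y$ and invoking Lemma \ref{lm:marginal-kernel-dpp} first), followed by the triangle inequality and the column-sum identity $\sum_{\ba}(\det\bU_{\ba\bb})^2=\det(\bU^\top\bU)_{\bb}=1$. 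Your explicit observation that the matched DPP shares the eigenvectors of $\bL$, so the same array $(\det\bU_{\ba\bb})^2$ applies to both measures, is the same point the paper uses implicitly.
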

Lemma \ref{lm:diag-reduction} implies that if diagonal $k$-DPPs converge to matched diagonal DPPs, so do general k-DPPs. The proof is deferred to the appendix (section \ref{sec:reduction_diag_proof}). Armed with this lemma, we now focus only on the diagonal case. 

Our goal is now to compute inclusion probabilities in diagonal $k$-DPPs. Recall that $\ba$ denotes a subset of $(1, \ldots ,n)$ of fixed size $m$. We wish to compute $p(\ba \in \Y)$, or equivalently, the probability that $p( \prod_{j \in \ba} z_{j}=1 )$.
This marginal probability can be computed via direct summation:
\begin{equation}
  p( \prod_{j \in \ba} z_{j}=1 ) = \frac{\left(\prod_{j \in \ba} \lambda_{i}\right) \sum_{\bb,|\bb| = k-|\ba|,\bb  \cap \ba = \emptyset} \prod_{j \in \bb} \lambda_j }{e_{k}(\bl)}
  = \left(\prod_{i \in \ba} \lambda_{i}\right) \frac{e_{k-m}(\bl_{-\ba})}{e_{k}(\bl)}
    \label{eq:inclusion-prob-diag}  
\end{equation}

Thus, inclusion probabilities in diagonal DPPs can be expressed using ratios of ESPs. This leads us to our next section, where we derive an asymptotic approximation for ESPs. We will then insert the asymptotic approximation into eq. \eqref{eq:inclusion-prob-diag}, to get an asymptotic series for inclusion probabilities. 

\subsubsection{Saddlepoint approximation for ESPs}
\label{sec:saddlepoint-ESPs}

ESPs are unwieldy combinatorial objects, but fortunately they lend themselves well to asymptotic approximation. This section is crucial for the rest and so we keep the details in the main text. 

ESPs have an elegant probabilistic interpretation (already noted in passing in \citep{Chen:WeightedFinitePopSampling}). An equivalent definition for ESPs views them as the coefficients in a power series: 
\begin{equation}
  \label{eq:gen_func_esp}
  e_{k}(\bl) = [x^{k}] \prod_{i=1}^n (1+ \lambda_i x)
\end{equation}
We borrow the notation $[x^{k}] f(x)$ from combinatorics to denote the coefficient of $x^{k}$ in the series $f$.
To uncover the probabilistic interpretation of ESPs, we transform the series into a \emph{probability} generating function.
\begin{align}
  \label{eq:gen_func_esp}
  e_{k}(\bl) &= [x^k] \prod_{i=1}^n (1+\lambda_i)\frac{(1+ \lambda_i x)}{1+\lambda_i} \\
  &= \prod_{i=1}^n (1+\lambda_i)  [x^k] \prod (1-p_i + x p_i )
\end{align}
where $p_i = \frac{ \lambda_i }{ 1 + \lambda_i } \in (0,1)$ is now to be interpreted as the parameter of a Bernoulli variable, $B_i$. Let $S_n = \sum_{i=1}^n B_i$ designate the sum of all such independent $B_i$'s.
Then:
$$ p(S_n = k) = [x^k] \prod_{i=1}^n (1-p_i + x p_i ) = \frac{e_k(\bl)}{\prod_{i=1}^n (1+\lambda_i)} $$
Since $S_n$ is the sum of $n$ independent random variables, it invites a central limit approximation to the $p(S_n=k)$. 
First, note that:
\begin{equation}
  \label{eq:mu}
  \mu = E(S_n) = \sum \frac{  \lambda_i }{1+\lambda_i} 
\end{equation}
which tells us that $e_k$, taken as a function of $k$, is likely to peak near $\mu$. 
The second moment,
\begin{equation}
  \label{eq:sigma2}
 \sigma^2 = Var(S_n) =  \sum \frac{ \lambda_i }{(1+\lambda_i)^2} 
\end{equation}
gives a measure of scale for the peak of  $e_k$ around $\mu$. Since $\frac{ \lambda_i }{(1+\lambda_i)^2} \leq \frac{\lambda_i}{1+\lambda_i}$, we have:
\begin{equation}
  \label{eq:var_lt_mu}
  \sigma^2 \leq \mu
\end{equation}

In studying the convergence of $k$-DPPs and DPPs, it is $\sigma^{2}$, rather than $\mu$ that captures the appropriate notion of  ``degrees of freedom''. In our case the Lyapunov Central Limit Theorem \citep{Billingsley:ProbAndMeasure} requires that $\sigma^{2}$ diverge asymptotically, and the condition we assumed on the sequence of L-ensembles guarantees exactly that (see section \ref{sec:which_matrices} for a discussion).

A much better approximation than the Gaussian CLT  is the saddlepoint approximation of \citep{Daniels:SaddlepointApprox}. Unlike the CLT, it is accurate in the tails and has $O(n^{-1})$ relative error. It reads:
\begin{equation}
  \label{eq:saddlepoint-approx}
  p(S_{n}=k) = \frac{1}{\sqrt{2\pi \psi''(\nus})}\exp \left(\psi(\nus) - k\nus \right) \left(1+O(n^{{-1}})\right)
\end{equation}
where $\psi(\nu) = \log E \left( \exp (\nu S_{n}) \right)$ is the cumulant-generating function of $S_{n}$, and $\nus$ is the solution of the saddlepoint equation:
\begin{equation}
  \label{eq:saddlepoint-eq-generic}
\nus = \argmin_{\nu}{\psi(\nu) - k\nu}
\end{equation}

In our case, we have:
\begin{align*}
   \psi(\nu) &= \log E \left( \exp (\nu S_{n}) \right) \\
   &= \sum_{i=1}^{n} \log E \left( \exp (\nu B_{i}) \right) \\
   &= \sum \log \left(\frac{1}{1+\lambda_{i}} + \frac{\lambda_{i}}{1+\lambda_{i}}e^\nu \right) \\
    &= \sum \log \left(1+\lambda_{i}e^\nu \right) - \sum \log \left(1+\lambda_{i} \right)   \numberthis \label{eq:cumulants}
    \end{align*}
We will need  the derivatives of $\psi$ as well:
  \begin{equation}
    \label{eq:cumulants-first-derivative}
\psi'(\nu) = \sum \frac{ \lambda_{i}e^\nu  }{1+\lambda_{i}e^\nu}
  \end{equation}
  \begin{equation}
    \label{eq:cumulants-second-derivative}
\psi''(\nu) = \sum \frac{\lambda_{i}e^\nu }{(1+\lambda_{i}e^\nu)^{2}}
  \end{equation}

  Inserting \eqref{eq:cumulants} into \eqref{eq:saddlepoint-eq-generic}, we see that:
  \begin{equation*}
     \sum \frac{ \lambda_{i}e^{\nus}  }{1+\lambda_{i}e^{\nus}} = k
  \end{equation*}
recovering \eqref{eq:nu_implicit}. 

To summarise: inserting \eqref{eq:gen_func_esp} into \eqref{eq:saddlepoint-eq-generic}, we have
\begin{lemma}
\begin{equation}
  \label{ eq:ESP-saddlepoint}
  e_{k}(\bl) = \frac{1}{\sqrt{2\pi \psi''(\nus}) }\exp \left(\sum_{i=1}^{n}( \log(1+\lambda_{i}e^{\nus})) - k\nus \right) ( 1 + O(n^{-1}))
\end{equation}
\end{lemma}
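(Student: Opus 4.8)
The plan is to combine the three ingredients already assembled in this section. First I would recall the exact identity derived above, $e_k(\bl) = \left(\prod_{i=1}^n (1+\lambda_i)\right) p(S_n = k)$, which re-expresses the ESP as a constant times a point-mass probability for $S_n = \sum_i B_i$, a sum of independent (non-identical) Bernoulli variables with parameters $p_i = \lambda_i/(1+\lambda_i)$. Then I would apply the Daniels saddlepoint approximation \eqref{eq:saddlepoint-approx} to $p(S_n = k)$ and substitute the explicit cumulant generating function \eqref{eq:cumulants}, $\psi(\nu) = \sum_i \log(1+\lambda_i e^\nu) - \sum_i \log(1+\lambda_i)$. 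The key algebraic observation is that the additive constant $-\sum_i \log(1+\lambda_i)$ inside $\exp(\psi(\nus) - k\nus)$ contributes a factor $\prod_i (1+\lambda_i)^{-1}$ which cancels exactly against the prefactor $\prod_i(1+\lambda_i)$ produced by the ESP-to-probability conversion. What survives is $\exp\left(\sum_i \log(1+\lambda_i e^{\nus}) - k\nus\right)$ together with the normalisation $(2\pi\psi''(\nus))^{-1/2}$ and the relative error $(1+O(n^{-1}))$, which is precisely the claim.

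Two points must be checked before trusting this cancellation. The first is that the saddlepoint equation $\psi'(\nus) = k$, which is \eqref{eq:nu_implicit}, admits a unique solution: $\psi'$ is continuous and strictly increasing ($\psi'' > 0$ since $\bL$ is positive definite, so every $\lambda_i > 0$) and maps $\R$ onto $(0,n)$, while $1 \le m < k < n$ places $k$ strictly inside that interval. The second, and the genuine obstacle, is justifying the $O(n^{-1})$ \emph{relative} error of \eqref{eq:saddlepoint-approx} in the present triangular-array regime: the $B_i$ are independent but not identically distributed, and the array itself changes with $n$ through the eigenvalues $\lambda_1^{(n)}, \ldots, \lambda_n^{(n)}$, so the classical Daniels statement cannot be quoted verbatim.

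To handle this I would use the lattice (integer-valued) form of the saddlepoint expansion, observing that $S_n$ is supported on a full block of consecutive integers around $k$, which supplies the span-$1$ non-degeneracy condition automatically. Evaluated at the saddlepoint the leading $O(n^{-1/2})$ Edgeworth term vanishes by symmetry, so the relative error of the bare formula is governed by the standardised higher cumulants of the exponentially tilted sum $S_n^{(\nus)}$, namely $\kappa_3/\psi''(\nus)^{3/2}$ and $\kappa_4/\psi''(\nus)^{2}$; since each tilted Bernoulli summand has cumulants bounded by an absolute constant, $\kappa_3,\kappa_4 = O(\psi''(\nus))$ and the relative error is $O\!\left(\psi''(\nus)^{-1}\right)$. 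It then remains to show $\psi''(\nus)$ diverges at the right rate: the standing assumption that $\sigma^2 = \Tr\!\left((\bL_n+\bI)^{-2}\bL_n\right)$ diverges (equivalently, by \eqref{eq:sigma2}, that $\psi''(0) \to \infty$) — together with control of the range of $\nu_n$ in the regime $k = \lfloor rn\rfloor$ of Theorem \ref{thm:main-result}, which keeps $\psi''(\nus) \asymp \psi''(0) \asymp n$ (cf. section \ref{sec:which_matrices}) — converts the bound into the asserted $O(n^{-1})$; along the way the Lyapunov condition needed for the underlying limit theorem is itself automatic, since $\sum_i E|B_i - p_i|^3 \le \sum_i p_i(1-p_i) = \sigma^2$ gives a Lyapunov ratio of order $\sigma^{-1} \to 0$. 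The cancellation of the $\prod_i(1+\lambda_i)$ factors is then a one-line substitution; the technical heart of the argument is this uniform control of the saddlepoint remainder along the sequence of L-ensembles.
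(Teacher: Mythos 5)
Your argument is correct and follows the same route as the paper: rewrite $e_k(\bl)$ as $\prod_i(1+\lambda_i)\,p(S_n=k)$ for a sum of independent Bernoullis, apply the Daniels saddlepoint approximation, and observe that the $\prod_i(1+\lambda_i)$ factor cancels against the constant in $\psi$. Your treatment is in fact more careful than the paper's on the one delicate point --- uniform control of the $O(n^{-1})$ relative error for a triangular array of non-identical lattice summands, which the paper handles only by citing Daniels and invoking the divergence of $\sigma^2$ --- so nothing is missing.
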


\begin{rem}
In large $n$ the exponential term dominates (a large deviation regime, see \citet{Touchette:EquivAndNonequiv}), and we have: 
\begin{equation}
  \label{ eq:ESP-saddlepoint-large-dev}
 \log  e_{k}(\bl) \approx \sum_{i=1}^{n} \log \left(1+\lambda_{i}e^{\nus} \right) - k\nus
\end{equation}

In random matrix theory it is customary to define the \emph{Shannon transform} of a matrix $\bL$ as:
$T(s) = \log \det( \bI + s\bL )$ \citep{Couillet:RandomMatrixMethods}. Eq. \eqref{ eq:ESP-saddlepoint-large-dev} says that for large matrices, the ESPs of $\bL$ are directly related to the Legendre transform of $T(e^{\nu})$. 
\end{rem}

At this stage, we have a tractable approximation to ESPs, and we are now ready to use it to find an approximation for inclusion probabilities.  

\subsubsection{Inclusion probabilities, and ratios of ESPs}
\label{sec:inclusion-prob-lemma}

To study the asymptotics of inclusion probabilities, we insert approximation \eqref{eq:saddlepoint-approx} into eq. \eqref{eq:inclusion-prob-diag}, and compute the $O(1)$ and $O(n^{-1})$ terms. The calculation is lengthy and can be found in the appendix (section \ref{sec:asymptotics-inclusion-proof}). The end result is as follows:

\begin{lemma}
  \label{lm:inclusion-probs}
In a diagonal k-DPP $\Y$ with L-ensemble $\mathrm{diag}(\lambda_{1},\ldots,\lambda_{n})$, inclusion probabilities have the asymptotic form: 
\begin{equation}
   \label{eq:incl-prob-improved}
   p_{k}( \ba \in  \Y ) = \left( \prod_{i \in \ba} \frac{\lambda_{i} \exp(\nus)}{1+\lambda_{i}\exp(\nus)}  \right)
   \left( 1 + \frac{1}{n} g(\nus)
     + O\left(\frac{1}{n^{2}} \right) \right)
 \end{equation}
 with
 \begin{equation*}
      g(\nus) = -\frac{\nu_{1}^{2}}{2}\psib''(\nus)
      - \frac{1}{ 2 \psib''(\nus)}  \left( \psib^{(3)} ( \nus) \nu_{1} - m \psib_{\ba}'' (\nus) \right)
 \end{equation*}

 The terms appearing in the correction $g(\nus)$ are defined in appendix \ref{sec:asymptotics-inclusion-proof}.
\end{lemma}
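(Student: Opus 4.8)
The plan is to start from the exact identity \eqref{eq:inclusion-prob-diag}, which writes $p_{k}(\ba\in\Y)$ as $\left(\prod_{i\in\ba}\lambda_{i}\right)e_{k-m}(\bl_{-\ba})/e_{k}(\bl)$, and to evaluate both ESPs with the refined saddlepoint expansion. First I would pass to the probability-generating-function picture of Section \ref{sec:saddlepoint-ESPs}. Writing $p_{i}=\lambda_{i}/(1+\lambda_{i})$ and letting $S_{n}=\sum_{i=1}^{n}B_{i}$ and $S'_{n-m}=\sum_{i\notin\ba}B_{i}$ be the corresponding sums of independent Bernoulli variables, one has $e_{k}(\bl)=\prod_{i=1}^{n}(1+\lambda_{i})\,p(S_{n}=k)$ and $e_{k-m}(\bl_{-\ba})=\prod_{i\notin\ba}(1+\lambda_{i})\,p(S'_{n-m}=k-m)$, so that
\begin{equation*}
  p_{k}(\ba\in\Y)=\left(\prod_{i\in\ba}p_{i}\right)\frac{p(S'_{n-m}=k-m)}{p(S_{n}=k)}.
\end{equation*}
Everything then reduces to the ratio of two lattice point probabilities, each handled by Daniels' saddlepoint formula \eqref{eq:saddlepoint-approx} carried to its next ($O(n^{-1})$) term.

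Next I would introduce the cumulant generating function $\psi$ of $S_{n}$ with saddlepoint $\nus$ (solving $\psi'(\nus)=k$, i.e.\ \eqref{eq:nu_implicit}), and the cumulant generating function of $S'_{n-m}$, which is $\psi$ with the $m$ summands indexed by $\ba$ removed; call its saddlepoint $\tilde\nu$, the solution of $\psi'(\tilde\nu)-\psi_{\ba}'(\tilde\nu)=k-m$ where $\psi_{\ba}(\nu)=\sum_{i\in\ba}\log(1+\lambda_{i}e^{\nu})$. The key quantitative observation is that the constraint is perturbed by the $O(1)$ amount $m$ and the generating function by the $O(1)$ quantity $\psi_{\ba}$, while the second derivative is $\Theta(n)=\Theta(\sigma^{2})$ by the standing assumption, so the saddlepoint moves only by $\tilde\nu-\nus=(\psi_{\ba}'(\nus)-m)/\psi''(\nus)+O(n^{-2})=O(n^{-1})$, whose leading coefficient (suitably normalised) is the quantity $\nu_{1}$ of the statement. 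I would then Taylor-expand around $\nus$, to relative order $O(n^{-1})$: the exponent $\psi(\tilde\nu)-\psi_{\ba}(\tilde\nu)-(k-m)\tilde\nu$ of the numerator, whose linear and quadratic terms in $\tilde\nu-\nus$ combine, via the saddlepoint equation for $\tilde\nu$, into the $-\tfrac{\nu_{1}^{2}}{2}\psib''(\nus)$ contribution; the prefactor $\bigl(\psi''(\tilde\nu)-\psi_{\ba}''(\tilde\nu)\bigr)^{-1/2}$ relative to $\psi''(\nus)^{-1/2}$, which produces the $\psib^{(3)}(\nus)\nu_{1}$ and $m\psib_{\ba}''(\nus)$ pieces; and the exponentials of the $\log(1+\lambda_{i})$ and $\log(1+\lambda_{i}e^{\nus})$ terms indexed by $\ba$. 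Collecting everything, the $\Theta(n)$ and $\Theta(1)$ contributions to the exponent of numerator and denominator must cancel exactly, and those surviving terms indexed by $\ba$, together with the factor $e^{m\nus}$ and the prefactor $\prod_{i\in\ba}p_{i}$, combine into exactly $\prod_{i\in\ba}\lambda_{i}e^{\nus}/(1+\lambda_{i}e^{\nus})$; at the next order one reads off the factor $1+g(\nus)/n$ of the stated form.

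Finally I would check that the \emph{intrinsic} $O(n^{-1})$ correction of Daniels' formula (the term built from the standardised third and fourth cumulants) contributes nothing to $g$: the standardised cumulants of $S'_{n-m}$ at $\tilde\nu$ agree with those of $S_{n}$ at $\nus$ to relative order $O(n^{-1})$, hence the two intrinsic corrections cancel up to $O(n^{-2})$; likewise the overall remainder is genuinely $O(n^{-2})$ once one verifies that $\nus$ remains in a fixed compact set (because $k=\lfloor rn\rfloor$ with $r\in(0,1)$), on which all relevant derivatives of $\psi$ are $\Theta(n)$ and the standardised cumulants are bounded.

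I expect the main obstacle to be the bookkeeping in the middle step: one is extracting an $O(n^{-1})$ relative correction from the ratio of two quantities whose leading size is exponential in $n$, so the cancellation of the $\Theta(n)$ and $\Theta(1)$ exponent contributions, of the $\Theta(\sqrt{n})$ prefactors, and of the intrinsic saddlepoint corrections all have to be tracked simultaneously and consistently, and the displacement $\tilde\nu-\nus$ must be expanded one order beyond the naive one. Pinning down the constants and signs in $g(\nus)$ — and confirming that no half-integer power of $n$ intrudes, which relies on the lattice structure of the $B_{i}$ and the standard regularity of the saddlepoint expansion — is where the real work lies.
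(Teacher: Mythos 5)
Your proposal follows essentially the same route as the paper's proof in Appendix \ref{sec:asymptotics-inclusion-proof}: start from the exact ratio $\left(\prod_{i\in\ba}\lambda_i\right)e_{k-m}(\bl_{-\ba})/e_k(\bl)$, apply the saddlepoint formula to both ESPs (equivalently, to $p(S_n=k)$ and $p(S'_{n-m}=k-m)$), expand the shifted saddlepoint to first order in $n^{-1}$ to obtain $\nu_1$, and track the exponent, the $\sqrt{\psi''}$ prefactor, and the cancellation of the intrinsic $O(n^{-1})$ Daniels corrections in the ratio — all of which the paper does via its $A$, $B$, $C$ decomposition and the perturbation parameter $\epsilon=n^{-1}$. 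The plan is correct and identifies the same terms $f_2=-\tfrac{\nu_1^2}{2}\psib''$ and the $B$-expansion contribution, so no further comparison is needed.
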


Notice that the $O(1)$ term corresponds exactly to the inclusion probability in the matched diagonal DPP, $\tilde{\Y}$. We now have all the elements we need to prove Theorem \ref{thm:main-result}.
%which is obtained directly from Lemma \ref{lm:inclusion-probs} by summing all error terms and applying Lemma \ref{lm:diag-reduction}. 
Consider a $k$-DPP with $m$-th order inclusion probability  $\pi_{m}$. Let $\tilde{\pi}_{m}$ be the $m$-th order inclusion probability of the matched DPP. Let the corresponding measure  for the generating diagonal $k$-DPP be  $\rho_{m} ( \ba)=p_{k}( \ba \in  \Y ) $, whose approximation  $\rho_{m} =\tilde{\rho}_{m} (1+ O(1/n))$ is given by eq. (\ref{eq:incl-prob-improved}). Starting with Lemma \ref{lm:diag-reduction} and using the approximation leads to 
\begin{eqnarray*}
D_{m}(\pi_{m},\tilde{\pi}_{m}) &\leq & D_{m}(\rho_{m},\tilde{\rho}_{m}) \\
%&=&  {k \choose m}^{-1} \sum_{\ba, |\ba| = m} \left| \rho_{m}( \ba) -\tilde{\rho}_{m} (\ba) \right| \\
&=& {k \choose m}^{-1} \sum_{\ba, |\ba| = m} 
\left( \prod_{i \in \ba} \frac{\lambda_{i} \exp(\nus)}{1+\lambda_{i}\exp(\nus)}  \right)
 \left(\frac{1}{n} g(\nus) + O\left(\frac{1}{n^{2}} \right) \right) \\
 &\stackrel{(a)}{=}&k^m {k \choose m}^{-1}  \left(\frac{1}{n} g(\nus) + O\left(\frac{1}{n^{2}} \right) \right) \\
 &\stackrel{(b)}{=}&O\left(\frac{1}{n} \right)
 \end{eqnarray*}
where equality $(a)$ is due to  equation (\ref{eq:nu_implicit}) which implicitly defines $\nus$, and equality $(b)$ holds because ${k \choose m}=O(k^m)$. This concludes the proof of the main result. A refinement is described in Appendix \ref{sec:corrected_approx}, where we derive a tractable correction to multivariate inclusion probabilities. 

A remark on the precise nature of the convergence result is in order. Regardless of how large $n$ is, a $k$-DPP will continue to produce sets of fixed size, while a DPP will continue to produce sets of variable size. This implies that DPPs and $k$-DPPs cannot be equivalent in the very strong sense of the respective probability mass functions agreeing on every possible set, since by definition they remain different. The result is of the same nature as \emph{equivalence of ensembles} in statistical physics: it pertains to two different distributions that agree more and more as $n$ tends to infinity, but never agree completely. Practically speaking, an interpretation is that for a given $n$, a $k$-DPP and a matched DPP will have very similar moments up to a certain order: certainly, at order $m > k$, this cannot be true, since the inclusion measure for the $k$-DPP is uniformly zero, but that is not true for the DPP. To get agreement up to higher orders, one has to increase $n$. 

Besides the main result, another consequence of lemma \ref{lm:inclusion-probs} is that in importance sampling estimators of the form given by eq. \eqref{eq:importance_sampling} can be used with approximate rather than exact probabilities. Using the $O(n^{-1})$ approximation induces order $O(n^{-1})$ bias, and similarly using the $O(n^{-2})$ correction induces order $O(n^{-2})$ bias. Our recommendation is therefore that one samples $k$-DPPs, rather than DPPs, while using the approximate inclusion probabilities in computations. 

\subsection{To which sequences of matrices does this apply?}
\label{sec:which_matrices}
We stated earlier that the result applies to any sequence of matrices whose degrees of freedom grow as a function of $n$, with the more precise statement being that $\Tr \left( (\bL_{n} + \bI)^{-2}\bL_{n}  \right)$ (see eq. (\ref{eq:sigma2})) should diverge. With the caveat that the condition is sufficient and not necessary, in what sort of scenarios can we expect it to hold?

A full discussion of the issue would require significant forays into random matrix theory and take us beyond the scope of the current work, so we only give a sufficient condition that is  relatively easily checked. As mentioned in section \ref{sec:kDPPs}, in $k$-DPPs, the L-ensemble can be multiplied by an arbitrary positive constant without changing the distribution. This means that we are free to scale each $\bL_{n}$ by an arbitrary constant independently for each $n$, a normalisation that lets us for instance set $\lambda_{max}$ to 1 for all $n$.
For $x \leq 1$,  $\frac{x}{(1+x)^{2}} \geq \frac{1}{4} x$, which implies that $\sigma^{2}(n) \geq \frac{1}{4} \Tr(\bL_{n})$, and a sufficient condition for the theorem to apply is therefore that $\Tr(\bL_{n})$ diverges. 

To pick a practical scenario, consider ``in-fill'' asymptotics. We suppose that the original set of data is made up of $n$ vectors in $\R^{d}$ sampled i.i.d. from a density $\rho(\mathbf{x})$.  The L-ensemble used is the classical squared-exponential (Gaussian) kernel. Let $\bL_{n} =\frac{\bM_{n}}{\lambda_{max}(\bM_{n})} $, where $M_{ij} = \exp \left(-\frac{1}{2\tau^{2}} || \mathbf{x}_{i} - \mathbf{x}_{j}  ||^{2} \right) $. $\Tr \bM = n$, and from the Gershgorin circle theorem we have a bound on $\lambda_{max}$ that reads $\lambda_{max} \leq \max_{i} \sum_{j} M_{ij}$. A sufficient condition for convergence is then that $\frac{n}{\max_{i} \sum_{ j} M_{ij} }$ diverges, which will not be the case for fixed $\tau$. The reason is that $\sum_{ j} M_{ij} = \sum \exp \left(-\frac{1}{2\tau^{2}} || \mathbf{x}_{i} - \mathbf{x}_{j}  ||^{2} \right) $ essentially counts the number of points in a neighbourhood of size $\tau$ around $\mathbf{x}_{i}$, and that quantity is $O(n)$. To make $\Tr(\bL_{n})$ diverge, we need to shrink $\tau$ with $n$ so that each point has $O(1)$ neighbours.   Similarly, the condition holds under so-called ``increasing-domain'' asymptotics, in which $\tau$ is fixed but we consider points in a growing window.
It is likely that one could relax these criteria, but in any case we must emphasise that (a) the approximations work really well in practice, see section \ref{sec:empirical_results}  and (b) actual simulations of $k$-DPPs require L-ensembles that have effective rank quite a bit larger than $k$, otherwise the numerical difficulties are overwhelming even though the process may be well defined. 

\subsection{Consequences for inference}
\label{sec:csq-for-inference}

DPPs are not only used for sampling, but also as statistical models for certain types of data that exhibit repulsion. Now in this case as well the modeller has to make a choice, and use either $k$-DPPs or DPPs. The former seems to imply that the number of observations (which is the role played here by $k$) is known in advance, while the latter does not. Interestingly, the results above imply that the choice of fixed size or varying size is of no consequence, at least if maximum likelihood is used for inference, though we suspect that Bayesian inference would be the same in that regard. 
To be precise, what we have in mind here is a case in which we observe a set $\X$ of size $k$, assumed to have been drawn from a $k$-DPP of matrix $\bL(\bt)$, where $\bt$ is a vector of parameters. For instance, $\bt$ may control the amount of repulsion in the point process.
The log-likelihood of a $k$-DPP is given by:
\begin{equation}
  \label{eq:loglik-kdpp}
  \mathcal{C}_{kDPP}(\bt) =  \log \det(\bL(\bt)_{\X}) - \log e_{k}\left(\bl_{\bt}\right) 
\end{equation}
The corresponding Maximum Likelihood estimator of $\bt$ is noted:
\begin{equation}
  \label{eq:mle-kdpp}
\hat{\bt}_{kDPP} = \argmin \mathcal{C}_{kDPP}(\bt)
\end{equation}

Similarly, a DPP model would assume $\X$ to be drawn from a DPP with L-ensemble $e^\nu \bL(\theta)$, where $e^\nu$ controls the expected cardinality of the set.
The log-likelihood reads in this case:
\begin{equation}
  \label{eq:loglik-dpp}
\mathcal{C}_{DPP}(\bt,\nu) = \nu k  + \log \det(\bL(\bt)_{\X}) - \log \det \left( \bI + e^\nu \bL(\bt) \right)
\end{equation}

Since $\nu$ is effectively a nuisance parameter, we may use a profile likelihood:
\begin{equation}
  \label{eq:loglik-profile}
\mathcal{C}^{\star}_{DPP}(\bt) = \max_{\nu} \mathcal{C}_{DPP}(\bt,\nu)
\end{equation}
The ML estimator of $\bt$ in this case solves:
\begin{align*}
  \label{eq:ML-estimator-dpp}
  \hat{\bt}_{DPP} &= \argmax_{\bt} \mathcal{C}^{\star}_{DPP}(\bt)
\end{align*}

To find a closed-form for the profile likelihood (eq. (\ref{eq:loglik-profile}), we take the derivative of $\mathcal{C}(\bt,\nu)$ with respect to $\nu$, to find:
\begin{equation*}
\frac{\partial}{\partial \nu }\mathcal{C}(\bt,\nu) =  k - \Tr ((\bI + e^\nu \bL)^{-1} e^\nu \bL)
\end{equation*}
where we recognise the saddlepoint equation in yet another form.

Equating the above to 0, we obtain:
\begin{equation}
  \label{eq:profile-likelihood-expl}
  \mathcal{C}^{\star}_{DPP}(\bt) =  \log \det(\bL(\bt)_{\X}) + \nus(\bt) k - \log \det \left( \bI + e^{\nus} \bL(\bt) \right)
\end{equation}

From eq. \eqref{ eq:ESP-saddlepoint} we know that:
\begin{equation}
  \label{eq:saddlepoint-inference}
    \log e_{k}(\bl) = \sum_{i=1}^{n} \left( \log(1+\lambda_{i}e^{\nus}) \right) - k\nus - \frac{1}{2} \left(  \log \left(\sum_{i=1}^{n} \frac{\lambda_{i}}{1+\lambda_{i}e^{\nus}} \right) + \log(2\pi) \right) + O(n^{-1})
\end{equation}
which tells us that:
\begin{equation}
  \label{eq:approx-cost-inference}
  \mathcal{C}^{\star}_{DPP}(\bt) = \mathcal{C}_{kDPP}(\bt) + O(\log(n) + n^{-1}) + constant
\end{equation}
where the term in $O(\log(n))$ comes from the second derivative of $\psi$ in \eqref{ eq:ESP-saddlepoint} and is expected to be small compared to $\mathcal{C}$. 
A full formal argument showing convergence of  $ \hat{\bt}_{DPP}$  to $\hat{\bt}_{k-DPP} $ is complicated, and amounts to showing that the $O(\log(n) + n^{-1})$ term is constant in a relevant region around $\hat{\bt}_{k-DPP}$. 
Informally, however, what happens is quite clear: the two cost functions are close (up to a vertical shift), and if they are sufficiently well-behaved (as a function of $\bt$), then we expect $ \hat{\bt}_{DPP} \approx   \hat{\bt}_{k-DPP} $. We verify this conjecture in a numerical example in section \ref{sec:inference-numerical-res}.

\section{Algorithms and numerical results}
\label{sec:alg-results}

The results above are interesting theoretically, but can also be used in practice to develop algorithms that compute (approximate) ESPs, sample diagonal $k$-DPPs, and compute inclusion probabilities. We find empirically that although approximate, they are much better behaved numerically than their nominally exact counterpart. 

\subsection{Computing ESPs}
\label{sec:computing-esps}

The algorithm given in \cite{KuleszaTaskar:DPPsforML} (alg. 7, p. 60) for computing ESPs of all orders is fast \footnote{Their algorithm runs in $O(n^{2})$, like ours, although theirs is faster in practice.   } but prone to numerical problems when $n$ is large, which is not completely surprising given that ESPs can vary over dozens of orders of magnitude. We find that the saddlepoint approximation given in eq. (\ref{ eq:ESP-saddlepoint}) is more practical, especially since it is naturally computed on a logarithmic scale, a perk exact algorithms do not share. 
To compute eq. (\ref{ eq:ESP-saddlepoint}), one needs to solve the saddlepoint equation (eq. (\ref{eq:nu_implicit})) for $\nu$. Newton's algorithm can be used (alg. \ref{alg:newton-saddlepoint}), but it needs appropriate initialisation or it may not converge (when it does converge, it does so very fast).
In our implementation, an initial guess for $\nu$ is found by linearising the saddlepoint equation for small $\nu$ (small $k$), and large $\nu$ (large $k$).
In small $\nu$, we have:
\begin{equation*}
    \sum_i \frac{\lambda_i e^\nu}{1+\lambda_ie^\nu} \approx e^\nu \sum \lambda_{i}  
  \end{equation*}
  so that for $k$ small, we may approximate $\nu$ as:
  \begin{equation}
    \label{eq:nu_k_small}
    \nu \approx \log k - \log \sum \lambda_{i}
  \end{equation}
  In large $k$ we find:
\begin{equation*}
    \sum_i \frac{\lambda_i e^\nu}{1+\lambda_ie^\nu} \approx n - e^{-\nu} \sum \frac{1}{\lambda_{i}}
  \end{equation*}
  which solving for $\nu$ results in: 
  \begin{equation}
    \label{eq:nu_k_large}
    \nu \approx \log(n-k) - \log \sum \lambda_{i}^{-1}
  \end{equation}
  We use the first guess for $k \leq \frac{n}{2}$ and the second otherwise, with good results. Interestingly, (\ref{eq:nu_k_small}) and (\ref{eq:nu_k_large}) can be used to find the worst-case relative error of the saddlepoint approximation, which is about 10\%, a figure we verify in practice for all but the smallest $n$. The saddlepoint approximation is at its worst far out in the tails, that is, for $k=1$ and $k=n-1$. Recall that $e_{1}(\bl) = \sum \lambda_{i}$. We inject (\ref{eq:nu_k_small}) into (\ref{ eq:ESP-saddlepoint}) and linearise to find:
  \begin{equation}
    \label{eq:error-approx-esp}
    \frac{1}{\sqrt{2\pi \psi''(\nus}) }\exp \left(\sum_{i=1}^{n}( \log(1+\lambda_{i}e^{\nus})) - k\nus \right) \approx \frac{1}{\sqrt{2\pi}} \exp (1) \sum \lambda_{i}  
  \end{equation}
so that the relative error is about $\frac{\exp (1)}{\sqrt{2\pi}} \approx 1.08$. A similar calculation for $k = n-1$ yields the same figure. 

To compute all ESPs, it is useful to begin at $k=1$ and then ``warm-start'' the optimisation rather than always use the same initial condition. The procedure is outlined in algorithm \ref{alg:all-esps}. 

\begin{algorithm}
  \caption{Solving the saddlepoint equation}
  \label{alg:newton-saddlepoint}
  Input: eigenvalues $\bl$, set size $k$, initial guess $\nu_{0}$, tolerance $\epsilon$
  
  \begin{algorithmic}
    \Procedure{solve}{$\bl$,$k$,$\nu_{0}$}
    \State $\nu \gets \nu_0$
\While {$|\psi'(\nu) - k| < \epsilon $}
    \State $\nu \gets \nu - \frac{\left(\psi'(\nu) - k\right)}{\psi''(\nu)} $ (eq. \eqref{eq:cumulants-first-derivative} and \eqref{eq:cumulants-second-derivative}). 
    \EndWhile
    \EndProcedure
\end{algorithmic}
Return $\nu$
\end{algorithm}

\begin{algorithm}
  \caption{Computing all (log-) ESPs using a saddlepoint approximation}
  \label{alg:all-esps}
  Input: eigenvalues $\bl$ (vector of length $n$),
  Set initial guess to $\nu= - \log \sum \lambda_{i}$. 
  \begin{algorithmic}
    \For { $k \gets 1 \ldots (n-1) $ }
    \State $\nu \gets \text{solve}(\bl,k,\nu)$
    \State $\log e_{k} \gets - \frac{1}{2} \log\left(2 \pi \psi''(\nu) \right) + \sum_{i=1}^{n} \log(1+\lambda_{i}e^\nu) - k\nu  $
\EndFor
\end{algorithmic}
Return $\log e_{1} \ldots \log e_{n-1}$
\end{algorithm}

\subsection{Computing inclusion probabilities}
\label{sec:inclusion-prob}

\subsubsection{In diagonal $k$-DPPs}
\label{sec:inclusion-prob-diag}

\begin{algorithm}
  \caption{First order inclusion probabilities in diagonal $k$-DPPs: basic estimate}
  \label{alg:first-order-diag-simple}
  Input: eigenvalues $\bl$, set size $k$.

  \begin{algorithmic}
    \Procedure{diag-basic}{$\bl$,$k$}
        \State $\nu \gets \text{solve}(\bl,k,\nu_{0})$ (Solve for saddle point)
    \For {$i \in 1 \ldots n$} 
    \State     $\tilde{\pi}_{i} \gets \frac{e^{\nu}\lambda_{i}}{1+e^{\nu}\lambda_{i}}$
    \EndFor
    \EndProcedure
\end{algorithmic}
Return $\tilde{\pi_{1}} \ldots \tilde{\pi_{n}} $
\end{algorithm}

\begin{algorithm}
  \caption{First order inclusion probabilities in diagonal $k$-DPPs: corrected estimate}
  \label{alg:first-order-diag-corrected}
  Input: eigenvalues $\bl$, set size $k$.

  \begin{algorithmic}
    \Procedure{diag-corrected}{$\bl$,$k$}
    \State $\nu \gets \text{solve}(\bl,k,\nu_{0})$ (Solve for saddle point)
    \State $\psib'' \gets  \frac{1}{n} \sum \frac{\lambda_{i}e^\nu }{(1+\lambda_{i}e^\nu)^{2}}$
    \State $\psib^{(3)} \gets \frac{1}{n} \sum \frac{\lambda_{i}e^\nu(1-\lambda_{i}e^{\nu}) }{(1+\lambda_{i}e^\nu)^{3}}$

    \For {$i \in 1 \ldots n$}
    \State $\psib'_{i} \gets \frac{e^{nu}\lambda_{i}}{1+e^{nu}\lambda_{i}}$
    \State $\psib''_{i} \gets \frac{\lambda_{i}e^\nu }{(1+\lambda_{i}e^\nu)^{2}}$
    \State $\nu_{1} \gets \frac{1-\psib'_{i}}{\psib''}$
     \State     $g \gets -\frac{\nu_{1}^{2}}{2}\psib''
      - \frac{1}{ 2 \psib''}  \left( \psib^{(3)}  \nu_{1} - m \psib_{\ba}'' \right)$

    \State     $\tilde{\pi}_{i} \gets \frac{e^{\nu}\lambda_{i}}{1+e^{\nu}\lambda_{i}}(1+\frac{g}{m})$
    \EndFor
    \EndProcedure
\end{algorithmic}
Return $\tilde{\pi_{1}} \ldots \tilde{\pi_{n}} $
\end{algorithm}

\begin{algorithm}
  \caption{First order inclusion probabilities in general $k$-DPPs}
  \label{alg:first-order-general}
  Input: L-ensemble $\bL$, set size $k$

  \begin{algorithmic}
      \State  $\bU \gets \text{eigenvectors}(\bL)$, $\bl \gets \text{eigenvalues}(\bL)$
      \State $\tilde{\bm{\pi}} \gets \text{diag-simple}(\bl,k)$ or $\tilde{\bm{\pi}} \gets \text{diag-corrected}(\bl,k)$
      \For {$i \in 1 \ldots n$}
      \State $\tilde{p}_{i} \gets \sum_{j=1}^{n} U_{ij}^{2} \pi_{j}$
      \EndFor
    \end{algorithmic}
Return $\tilde{p}_{1}  \ldots \tilde{p}_{1} $
  \end{algorithm}

\begin{algorithm}
  \caption{High order inclusion probabilities in general $k$-DPPs}
  \label{alg:high-order-general}
  Input: L-ensemble $\bL$, subset $\ba$, set size $k$

  \begin{algorithmic}
     
      \State  $\bl \gets \text{eigenvalues}(\bL)$.
      \State $\nu \gets \text{solve}(\bl,k,\nu_{0})$
      \State $\tilde{p}_{\ba} \gets \det \left( (\bI + e^{\nu}\bL)^{-1} e^{\nu} \bL  \right)_{\ba}$
      \State (Optional: compute correction
      \State $m \gets |\ba|$ (size of subset)
      \State $\tilde{\bm{\pi}} \gets \text{diag-simple}(\bl,k)$
      \State $v \gets e_{m}(\tilde{\bm{\pi}})$ 
      \State $\tilde{p}_{\ba} \gets \tilde{p} \times {k \choose m}v^{-1}$)
    \end{algorithmic}
Return $\tilde{p}_{\ba}$
\end{algorithm}

Computing inclusion probabilities boils down to an application of the asymptotic formula developed in section \ref{sec:asymptotics-inclusion-proof}. Depending on the accuracy required, the $O(\epsilon)$ in (\ref{eq:incl-prob-improved}) may or may not be needed. Computing the terms in (\ref{eq:incl-prob-improved}) requires solving the saddlepoint equation, and computing $\psi(\nus)$ and up to three derivatives, which is $O(n)$ work in total. $\psi'$ and $\psi''$ are by-products of the Newton iteration, and only $\psi^{(3)}$ must be computed from scratch. All first order inclusion probabilities can be computed jointly based on these quantities, in $O(n)$ time.
Algorithm \ref{alg:first-order-diag-simple} computes the uncorrected estimate, and alg. \ref{alg:first-order-diag-corrected} the corrected estimate (the latter may be hard to understand without a thorough look at section \ref{sec:asymptotics-inclusion-proof}). 

\subsubsection{In general $k$-DPPs}
\label{sec:inclusion-prob-general}

Computing (approximate) inclusion probabilities in general $k$-DPPs is an application of eq. (\ref{eq:inclusion-prob-general}): first compute the inclusion probabilities for the eigenfunctions, then apply (\ref{eq:inclusion-prob-general}). For first-order inclusion probabilities, and under the $O(\frac{1}{n})$ approximation, this boils down to computing
\begin{equation}
  \label{eq:first-order-as-diag}
\diag \left( e^{\nus}( e^{\nus} \bL + \bI)^{-1}  \bL \right)  
\end{equation}
where $\nus$ solves the saddlepoint equation for the appropriate value of $k$. $\nus$ depends on the eigenvalues of $\bL$, so the most straightforward way of computing (\ref{eq:first-order-as-diag}) is via an eigendecomposition of $\bL$, after which one obtains (\ref{eq:first-order-as-diag}) from:
\begin{equation}
  \label{eq:first-order-from-eigenvalues}
  \left( e^{\nus} (e^{\nus} \bL + \bI)^{-1}  \bL \right)_{ii}  =
  \sum_{j=1}^{n} \frac{ e^{\nus} \lambda_{j}}{1+e^{\nus} \lambda_{j}} U_{ij}^{2}
\end{equation}
where $U_{ij}$ is the $j$-th eigenvector of $\bL$ evaluated at index $i$. 
In most realistic problems the dominant cost by far is the $O(n^{3})$ eigendecomposition. If $\bL$ is sparse, or if matrix-vector products $\bL\mathbf{v}$ can be computed using fast algorithms, the cost can be significantly reduced using a variety of techniques. For example, under sparse $\bL$ the Cholesky decomposition may still be relatively cheap, and the Takahashi equations \citep{RueHeld:GaussMarkovRandomFields} can be used to obtain diagonal elements in (\ref{eq:first-order-as-diag})  and solve the saddlepoint equation.  
Alg. \ref{alg:first-order-general} computes first-order probabilities, and alg. \ref{alg:high-order-general} higher order inclusion probabilities. The optional correction used in alg. \ref{alg:high-order-general} is explained in section \ref{sec:corrected_approx}. 
\subsection{Sampling}
\label{sec:sampling-$k$-DPPs}

There already is a large literature on sampling from ($k$-)DPPs (see e.g., \citep{Li:EffSamplingDPP,Gautier:ZonotopeSamplingDPPs} and references therein). Our goal here is only to show how our methods can be used to modify the algorithms given in \cite{KuleszaTaskar:DPPsforML} to improve numerical stability. 
To sample a $k$-DPP, we follow the two-step strategy of \cite{KuleszaTaskar:DPPsforML}, which derives from the mixture interpretation explained in section \ref{sec:kDPPs}. We first sample a set of eigenvectors, picking $k$ of them using a diagonal $k$-DPP, then sample from the projection DPP formed from the eigenvectors we selected.

\subsubsection{Sampling from a diagonal $k$-DPP}
\label{sec:sampling-diag}

The first part requires sampling from a diagonal $k$-DPP.  For that task, \cite{KuleszaTaskar:DPPsforML} give an algorithm that they justify using a recursive argument, but a more intuitive explanation can be found. Thinking of the $k$-DPP as sampling a binary string $\bz = z_{1} \ldots z_{n}$, we run through the elements one by one, sampling according to $p(z_{t} | z_{1} \ldots z_{t-1} )$. It is straightforward to show that in a diagonal $k$-DPP,  $z_{t} | z_{1} \ldots z_{t-1}$ has a sufficient statistic: $p(z_{t} | z_{1} \ldots z_{t-1} ) = p(z_{t} | \sum_{i=1}^{t-1} z_{i} )$. This occurs because $z_{t} \ldots z_{n} | z_{1} \ldots z_{t-1}$ is a diagonal $(k-s)$-DPP, where $s = \sum_{i=1}^{t-1} z_{i} $. Thus, $p(z_{t} = 1 | z_{1} \ldots z_{t-1} )$  is the inclusion probability for item $t$ in a diagonal $(k-s)$-DPP, and we can use our approximations to compute that probability.

\begin{algorithm}
  \caption{Sampling from a diagonal $k$-DPP}
  \label{alg:sampling-diag}
  Input: eigenvalues $\bl$ (vector of length $n$), integer $k$ (set size).
  Init $s=0,t=1$
  \begin{algorithmic}
    \While { $t \leq n, s < k $ }
    \State Compute $\pi_{t}$, inclusion probability in a diagonal $(k-s)$-DPP with eigenvalues $\lambda_{t} \ldots \lambda_{n}$, using eq. (\ref{eq:incl-prob-improved}).
    \State Set $z_{t}$ to $1$ with probability $\pi_{t}$
    \State $s \gets \sum_{i=1}^t z_{i} $
    \State $t \gets t + 1$
    \EndWhile

\end{algorithmic}
    Return $\bz$, the inclusion vector.
\end{algorithm}

We state the basic algorithm in alg. \ref{alg:sampling-diag}, but many refinements can be made for speed. In particular, computing the approximation requires solving the saddlepoint equation, and warm-starting should be used. Beyond that, given that the cost of sampling from a $k$-DPP is mostly dominated by the eigenvalue decomposition and by the step where a projection DPP is sampled, it is not worth spending too much time optimising the diagonal step. 

\subsection{Sampling from a projection DPP}

Once we have obtained $k$ eigenvectors, we can form the projection kernel $\mathbf{U}_{:,\Y}\mathbf{U}_{\Y,:}^\top$ and use any algorithm that samples a projection DPP. There are several options in the literature, but one that is both fast and particularly easy to implement is described in \citep{Tremblay:OptimAlgSampleDPPs}, and that is the one we use here in our simulations.

\section{Empirical results}
\label{sec:empirical_results}

We report here the accuracy of our approximations in some tests and simulations. We examine briefly the quality of the approximation for ESPs, then inclusion probabilities in diagonal $k$-DPPs, and finally inclusion probabilities in general $k$-DPPs. 

\subsection{Approximation of Elementary Symmetric Polynomials}
\label{sec:approx_esps}

The approximation to ESPs given in eq. (\ref{ eq:ESP-saddlepoint}) is nothing more than a saddlepoint approximation for sums of Bernoulli variables, so it would be surprising if it did not work as advertised. Nonetheless it is interesting to see how good the approximation is, and that the figure we give in section \ref{sec:computing-esps} for a maximum error of 10\% (for $k=1$ and $k=n-1$) is verified in practice. It also serves to illustrate the better numerical behaviour of the approximation compared to the (nominally exact) summation algorithm.

Figure \ref{fig:ESP_1toN} and \ref{fig:ESP_exp_min_1_to_n} show results obtained on two deterministic sequences, $ \lambda_{i,n} = i $ and $\lambda_{i,n} = e^{-i}$ (for three different values of $n$). The approximation is excellent even with the second sequence, which does not verify the sufficient condition for convergence ($\sigma^{2}=O(n)$). The summation algorithm overflows in the first case and underflows in the second, while the approximation shows good behaviour. 

To go beyond deterministic sequences, we consider a set of $n$ points in $\R^{2}$, drawn from a unit Gaussian distribution. The $\bL$-matrix is from a squared-exponential kernel,
$L_{ij}=\exp \left(\frac{ -|| x_{i} - x_{j} ||^{2} }{ 2 \tau ^{2}}  \right)$. Here we set $\tau=1$. Figure \ref{fig:rel_error_gaussker} shows the ratio of approximation to true value $\tilde{e}_{k}(\bl)/e_{k}(\bl)$, where $\bl$ are the eigenvalues of $\bL$. 

\begin{figure}
  \includegraphics[width=10cm]{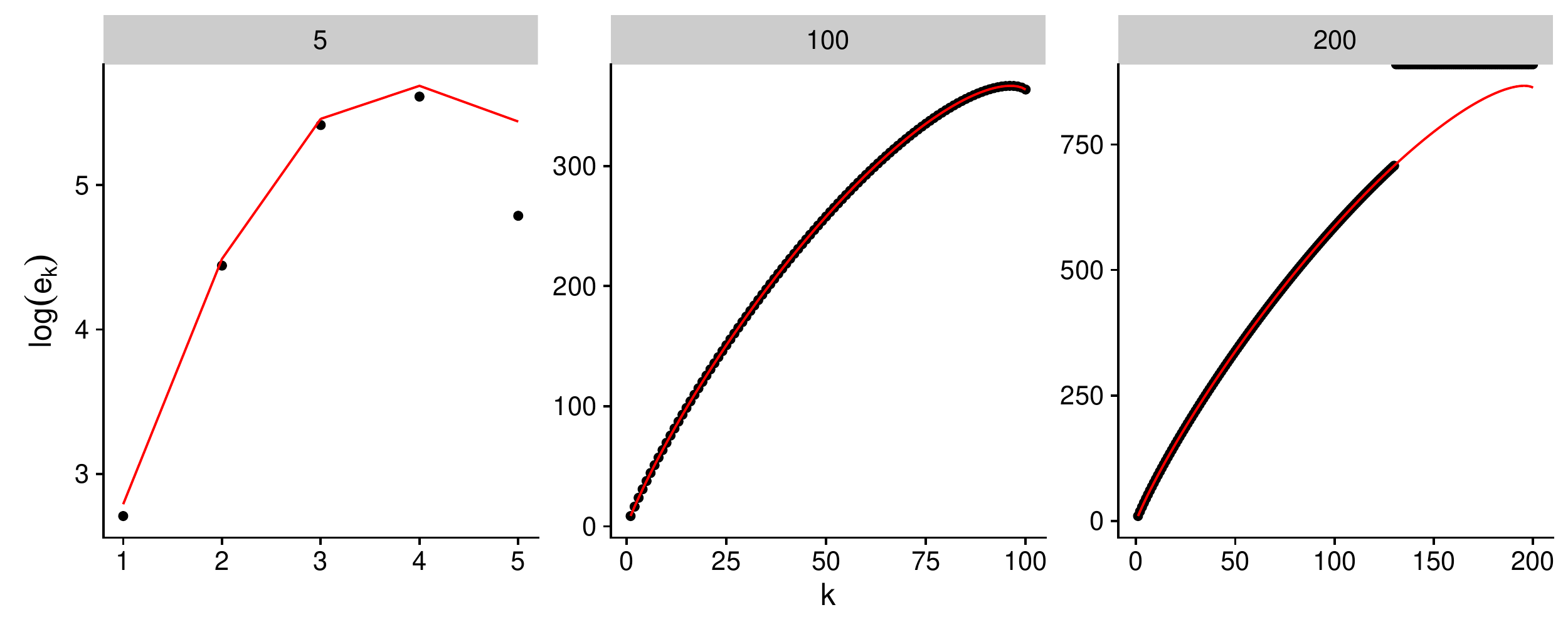}
  \centering
  \caption{Approximation of  (log) ESPs for the sequence $\lambda_i=i , i=1,\ldots, n$. Black dots: numerical results obtained using the ``exact'' summation algorithm. Red line: approximation using eq. (\ref{ eq:ESP-saddlepoint}). We show results for $n$=5, 100 and 200 . Numerical problems are already apparent at $n$=200, with the summation algorithm overflowing at $k \approx 130$. The approximation has no such issues in this case.    }
  \label{fig:ESP_1toN}
\end{figure}

\begin{figure}
  \centering
  \includegraphics[width=10cm]{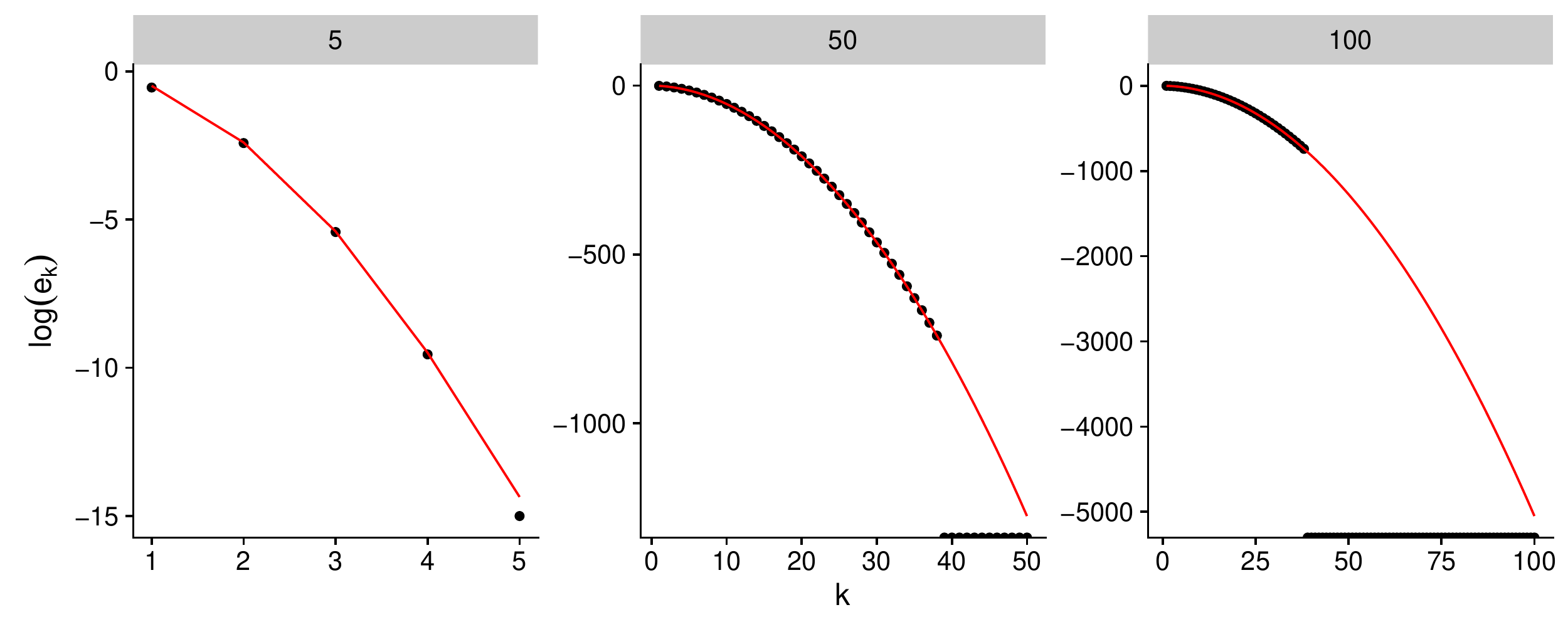}

  \caption{Approximation of (log) ESPs for the sequence $\lambda_i=e^{-i} , i=1,\ldots, n$. Same format as figure \ref{fig:ESP_1toN} }
  \label{fig:ESP_exp_min_1_to_n}
\end{figure}

\begin{figure}
  \centering
  \includegraphics[width=10cm]{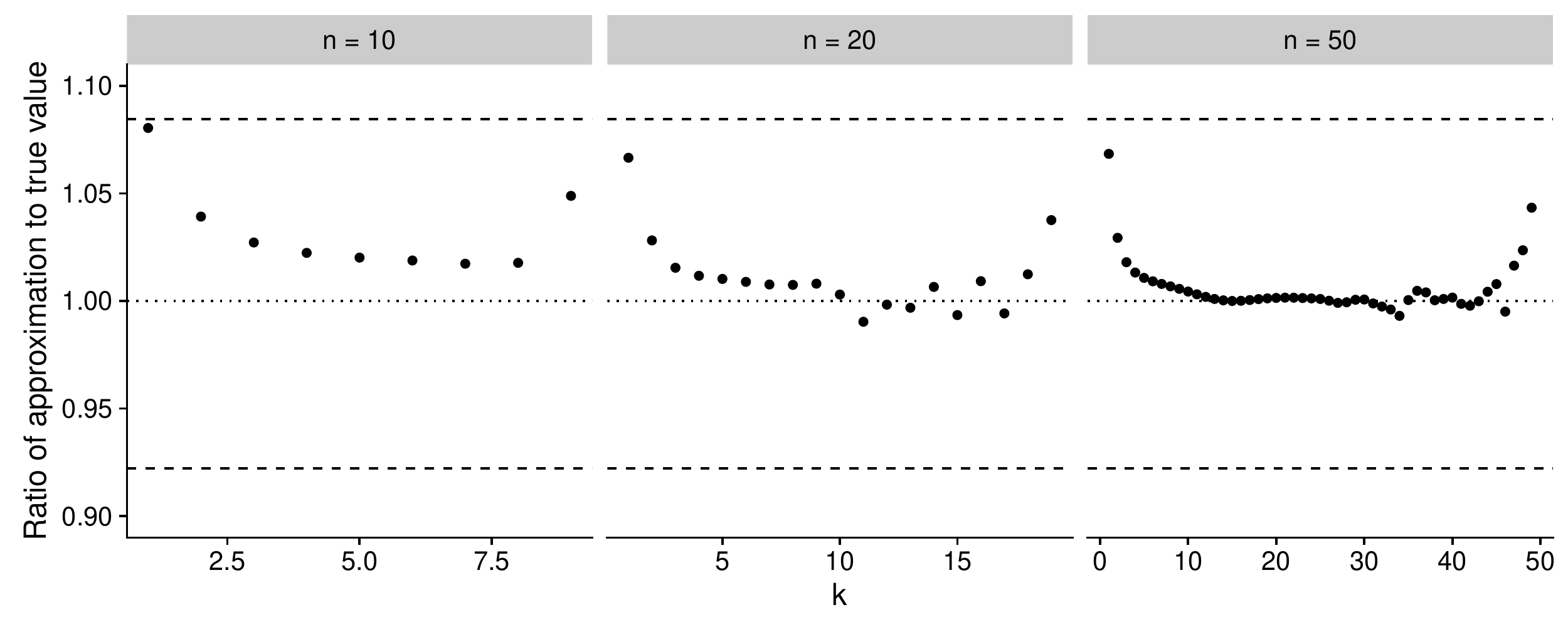}
  
  \caption{Ratio of approximated ESP to true ESP, for an example with $n$ points at random locations and a squared-exponential kernel (see text). From the arguments in section \ref{sec:computing-esps}, we expect a maximum relative error of about 1.08, shown here as upper and lower dashed lines. The central dashed lines corresponds to no error.  }
  \label{fig:rel_error_gaussker}
\end{figure}

\subsection{Approximation of inclusion probabilities}
\label{sec:results-approx-inclusion}

We begin with approximations to inclusion probabilities in diagonal $k$-DPPs. Figure \ref{fig:incl-diag} shows results for a diagonal $k$-DPP with diagonal values $\lambda_{i,n} = \exp(-\frac{i}{10})$, comparing true inclusion probabilities to the $O(n^{-1})$ and $O(n^{-2})$ approximations given by lemma \ref{lm:inclusion-probs}. The approximations are overall excellent, with even the rougher $O(n^{-1})$ approximation becoming practically exact for $n \geq 100$. Note that the conditions for convergence assumed in our theorem do not hold for the sequence in question. 

The $O(n^{-1})$ and $O(n^{-2})$ rates are asymptotic, and it is interesting to verify that they hold in practice. Figure \ref{fig:convergence-rates} does this, in a scenario where the conditions of the theorem hold. For each $n$, the diagonal values $\lambda_{1,n}$ to $\lambda_{n,n}$ are drawn i.i.d. from the uniform distribution on the interval $(1,10)$. We estimate convergence rates via a regression of $\log$ error on $\log n$.

\begin{figure}
  \centering
  \includegraphics[width=10cm]{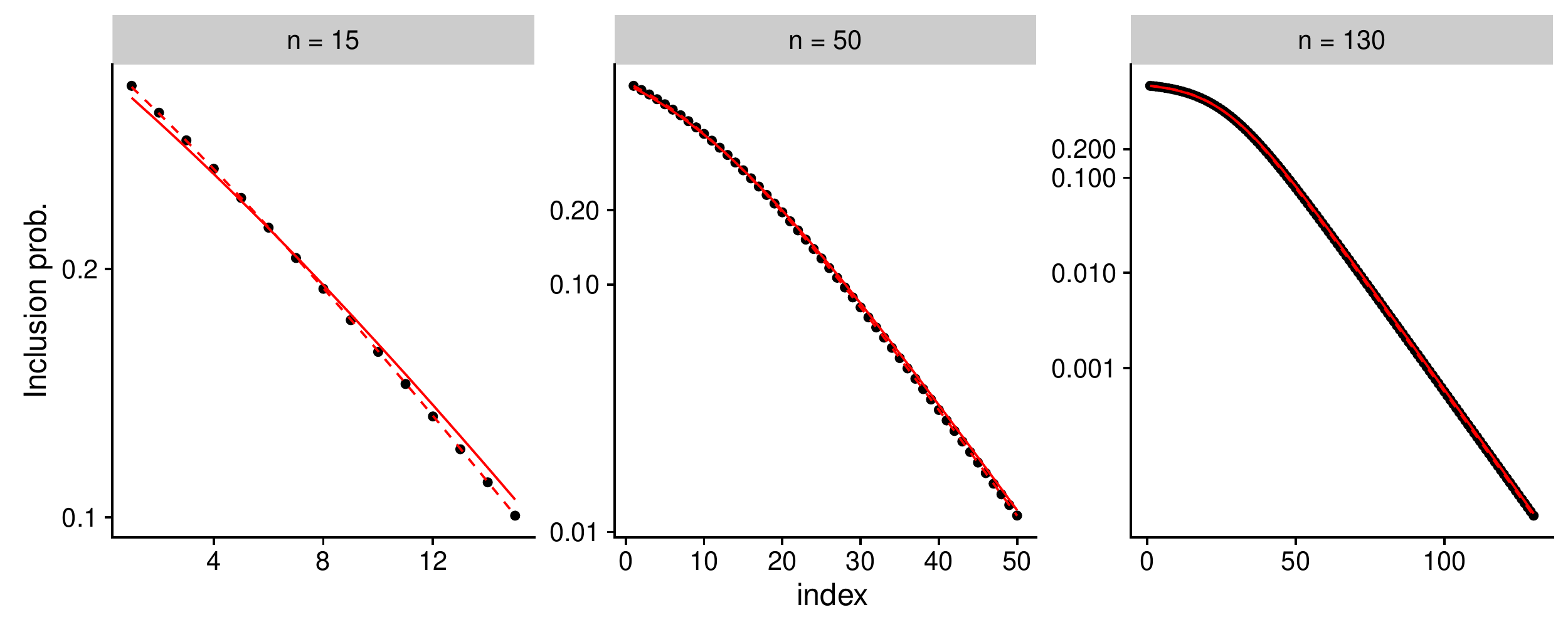}
  \caption{Inclusion probabilities for a diagonal $k$-DPP with diagonal entries $\exp(-\frac{i}{10})$, with $k = n/5$. Black: true values. Red, continuous: $O(n^{-1})$ approximation. Red, dashed, $O(n^{-2})$ approximation (see eq. (\ref{eq:incl-prob-improved})).  }
  \label{fig:incl-diag}
\end{figure}
\begin{figure}
  \includegraphics[width=6cm]{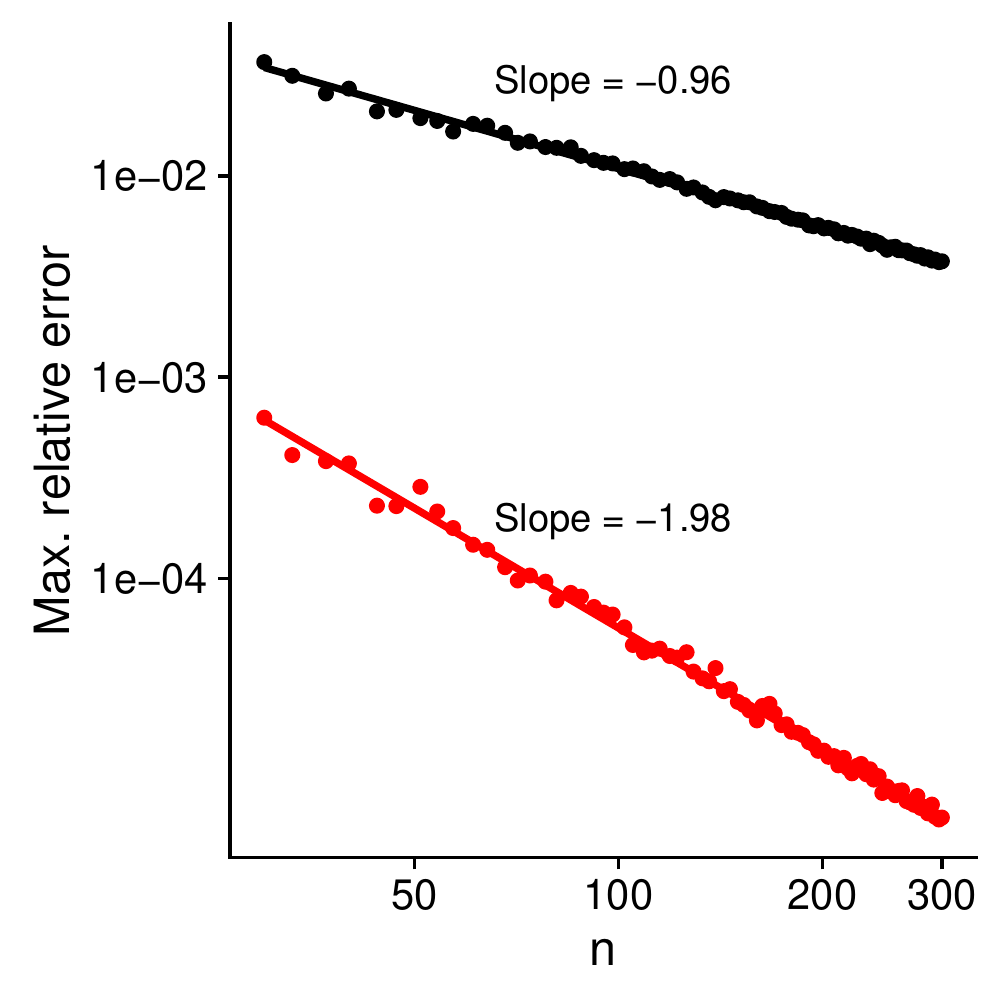}
  \centering
  \caption{Convergence of approximations to diagonal probabilities. We verify the $O(n^{-1})$ and $O(n^{-2})$ rates empirically. See text for details. }
\label{fig:convergence-rates}
\end{figure}

Unsurprisingly given the above, approximating inclusion probabilities in general $k$-DPPs works well too. Figure \ref{fig:incl_prob_gaussker} provides an illustration, using again $n$ points drawn i.i.d. from a Gaussian, as in figure \ref{fig:rel_error_gaussker}. Both approximations work extremely well for realistic values of $n$, and again we stress that this is a case in which the conditions for large-$n$ convergence do not hold (because the eigenvalues decrease too fast). 

\begin{figure}
  \centering
  \includegraphics[width=12cm]{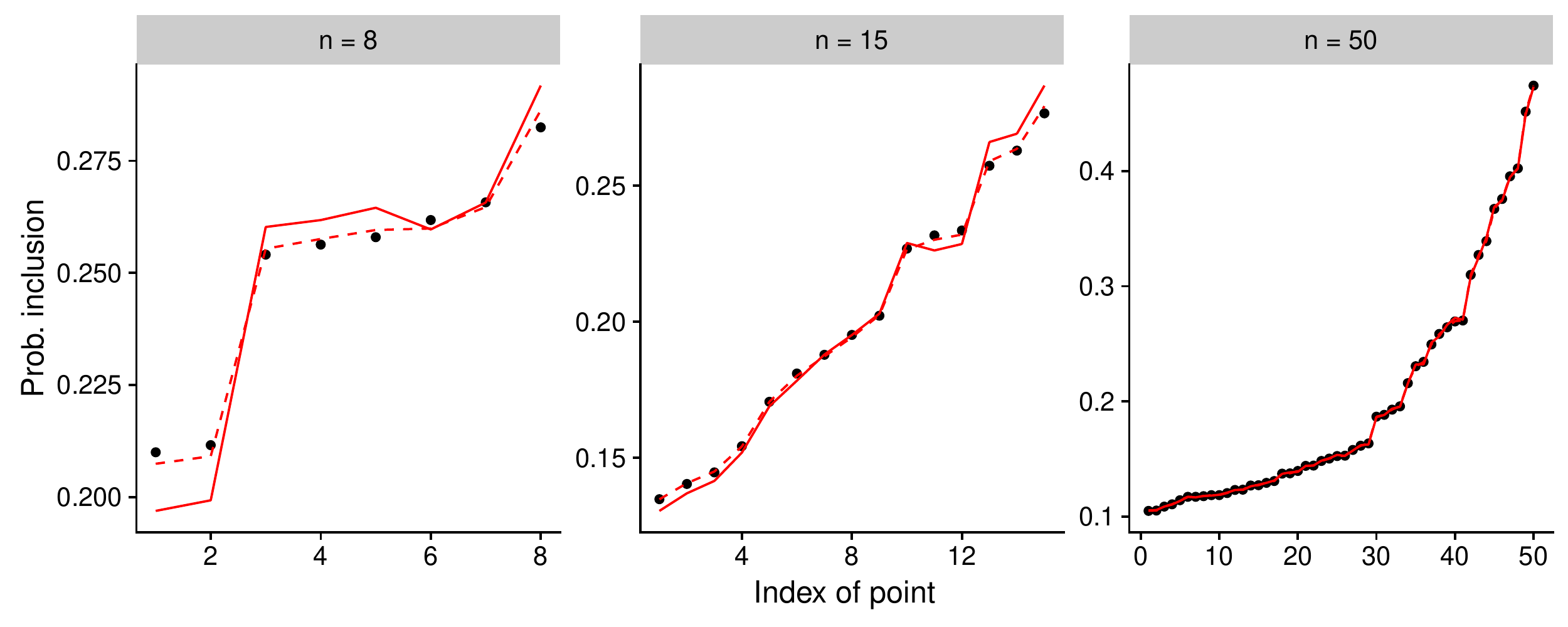}
  \caption{ Approximation to inclusion probabilities in a full $k$-DPP.  The scenario is the same as in figure \ref{fig:rel_error_gaussker}, namely $n$ points drawn  i.i.d. and a squared exponential kernel.  Each point correspond to the inclusion probabilities of point $x_{i}$ in a $k$-DPP. Points have been sorted according to increasing probability of inclusion.  }
  \label{fig:incl_prob_gaussker}
\end{figure}

Theorem \ref{thm:main-result} implies that we can approximate inclusion probabilities for pairs, not just singletons. We show an illustration in figure \ref{fig:incl_bivariate}, where we repeated the above experiment with $n=500$, $k=50$ and $\tau=0.5$. We picked 400 $m$-uples  at random and estimated their true inclusion probability using Monte Carlo \footnote{We used an empirical version of eq. \eqref{eq:incl-prob-mixture}, and 1,500 samples}. We compare the estimated inclusion probability to the $O(n^{-1})$ approximations, and to the corrected probabilities described in Appendix \ref{sec:corrected_approx}. The $O(n^{-1})$ approximation shows a slight bias for high probabilities but is overall very good, and most of the bias is removed by the correction. 

\begin{figure}
  \centering
  \includegraphics[width=10cm]{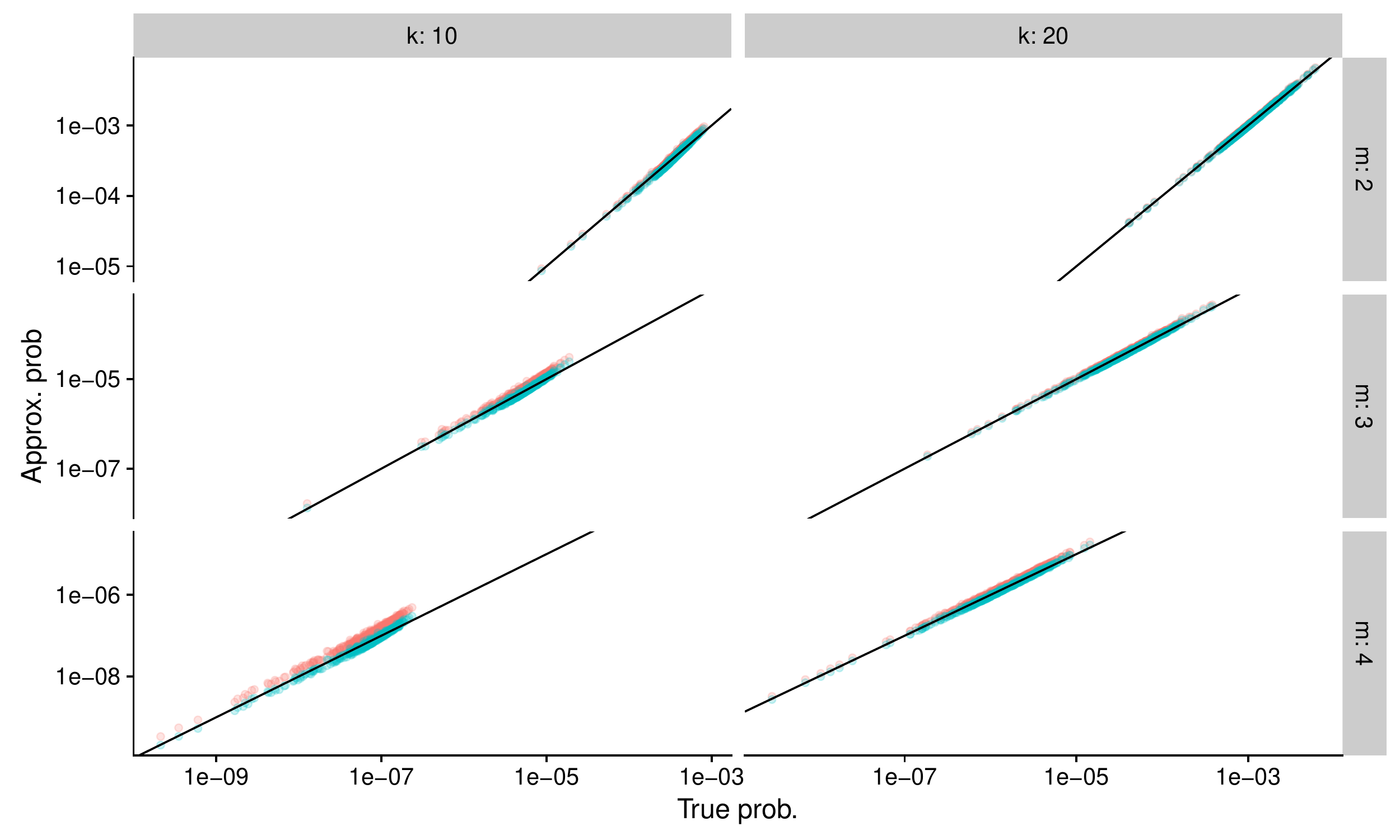}
  \caption{ Approximation to high order inclusion probabilities in a full $k$-DPP.  400 subsets of size $m$ are drawn at random and their 2nd order inclusion probability estimated using Monte Carlo (see text). We compare the estimate to the $O(n^{-1})$ approximation (in red) and the corrected approximation (in blue), see App. \ref{sec:corrected_approx}.   }
  \label{fig:incl_bivariate}
\end{figure}

\subsection{Inference}
\label{sec:inference-numerical-res}

The goal of this section is to illustrate the claims of section \ref{sec:csq-for-inference}, namely that $k$-DPPs and DPPs have equivalent ML estimators (when used as statistical models).

We again used the same setup as in the previous section: $n$ points drawn  i.i.d.  from a 2D Gaussian, with a subset $\X$ of size $k$ drawn from a k-DPP. Contrary to the previous sections, however, the objective here is to infer something about the L-ensemble given $\X$. We use two statistical models:
\begin{enumerate}
\item That $\X$ is drawn from a $k$-DPP with L-ensemble $L_{ij}=\exp \left(-\frac{ || x_{i} - x_{j} ||^{2} }{ 2 \tau ^{2}}  \right)$ (for an unknown value of $\tau$).
\item That $\X$ is drawn from a DPP with L-ensemble $\tilde{L}_{ij}= e^{\nu} \exp \left( -\frac{ || x_{i} - x_{j} ||^{2} }{ 2 \tau ^{2}}  \right)$ (for an unknown value of $\tau$ and $\nu$).
\end{enumerate}

In figure \ref{fig:log-likelihoods} we show the log-likelihood of the $k$-DPP model as a function of $\tau$, along with the profile log-likelihood of the DPP model ($\mathcal{C}^{\star}$, see eq. \eqref{eq:approx-cost-inference}). The maximum likelihood estimates of $\tau$ are the argmax of these curves, and as predicted they are extremely close.

\begin{figure}
  \centering
  \includegraphics[width=12cm]{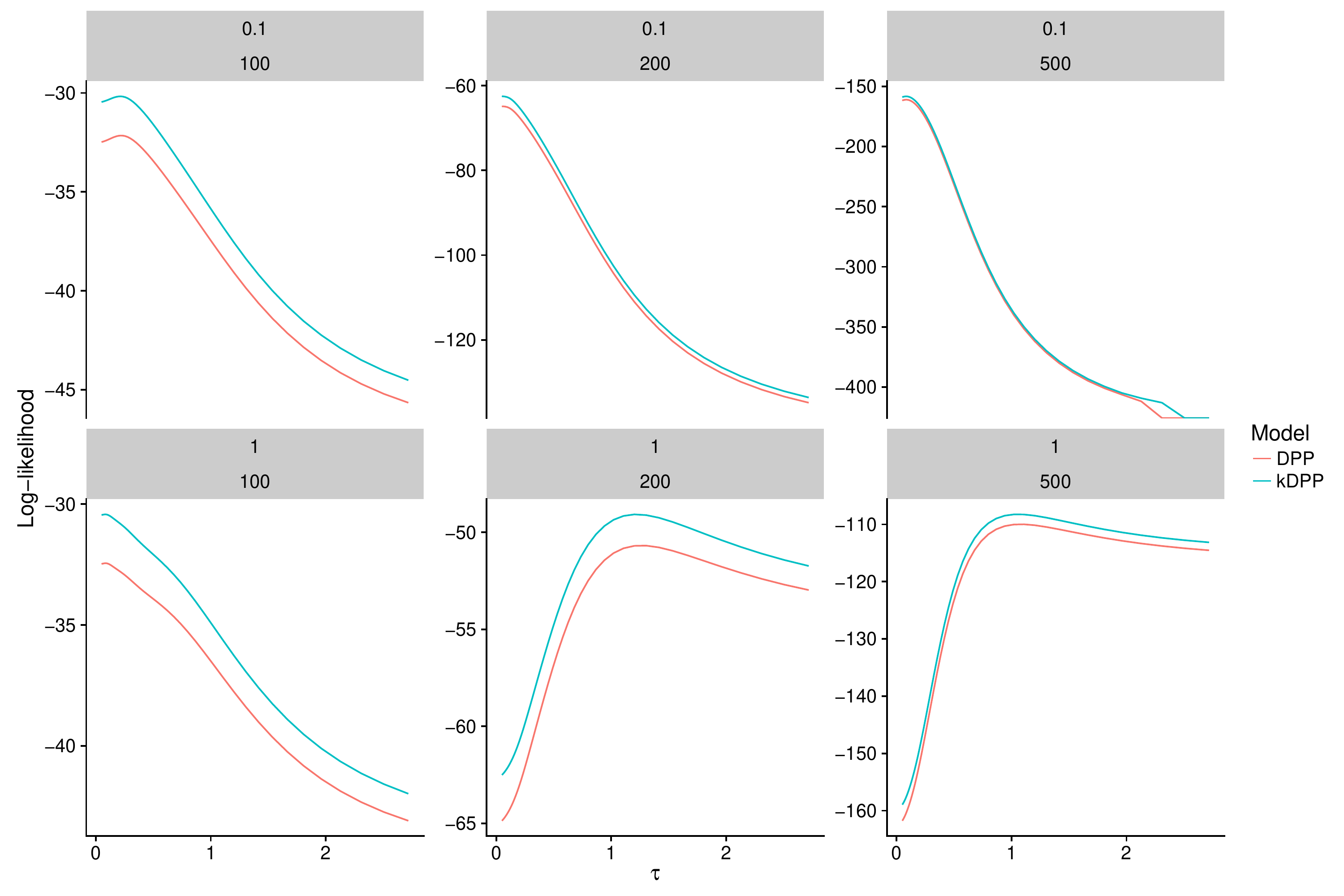}

  \caption{Log-likelihoods of a $k$-DPP and DPP model, for various values of $n$ (100 to 500) and true parameter $\tau$ (0.1 and 1). In blue, the $k$-DPP likelihood, red, the DPP profile log-likelihood.  In all cases, $k=\frac{n}{10}$}
    \label{fig:log-likelihoods}

\end{figure}

\section{Discussion}
\label{sec:discussion}

We have shown that $k$-DPPs are for practical purposes largely equivalent to DPPs, so that one can sample from a $k$-DPP, pretend that the realisation actually came from a matched DPP, and expect no major damage. Corrections to the inclusion probabilities come at little extra cost and increase the accuracy enough so that the approximations can be used with very small $n$. The saddlepoint approximation can be used to compute ESPs as well, and if more accuracy is needed we suggest including further Edgeworth terms.
The remaining hurdle is to develop appropriate algorithms that estimate the relevant functions of the L-ensemble, to remove the need for an eigenvalue decomposition. We hope to develop such methods in future work.

\section*{Acknowledgements}
This work benefited from funding from ANR GenGP (ANR-16-CE23-0008), LabEx PERSYVAL-Lab (ANR-11-LABX-0025-01), Grenoble Data Institute  (ANR-15-IDEX-02), CNRS PEPS I3A (Project RW4SPEC), and LIA CNRS/Melbourne Univ Geodesic. 

\bibliographystyle{imsart-nameyear}
\bibliography{../bibtex/ref_dpp} 

\appendix

\section{Appendix}
\label{sec:appendix}

\subsection{Proof of lemma \ref{lm:marginal-kernel-dpp}}
\label{sec:proof-kernel-proj}

Here we prove lemma \ref{lm:marginal-kernel-dpp}, which states that the inclusion kernel of a projection DPP equals the L-ensemble.
We need to compute the probability that $\ba \subseteq \X$, where $\X$ is a sample from a $k$-DPP with L-ensemble $\bL = \bU \bU^{\top}$, and $\bU$ is a $n \times k$ matrix such that $\bU^{\top} \bU = \bI$. It is important here that we are sampling sets of size $k$ from an L-ensemble of rank $k$. As elsewhere we note $|\ba| = m \leq k$.

We need the following well-known result on determinants of bordered matrices:
\begin{equation}
  \label{eq:det-bordered}
  \det \left[
    \begin{array}{c c}
      \mathbf{A} & \mathbf{b} \\
      \mathbf{b^\top} & c 
    \end{array}
  \right]
    = \left( \det \mathbf{A}  \right) \left(c - \mathbf{b}^\top \mathbf{A} ^{-1} \mathbf{b} \right)
\end{equation}

We also make use of the following result, which lets us perform partial sums in determinants.

\begin{align*}
  \sum_{i=1}^{n} \left(\bL_{i,i} - \bL_{i,\X} \bL_{\X}^{-1} \bL_{\X,i}\right)
  &= \Tr \bL - \sum_{i} \Tr \left\{ \bL_{\X}^{-1} \bL_{\X,i}\bL_{i,\X} \right\}  \\
  &= k -  \Tr \left\{  \bL_{\X}^{-1} \sum_{i} \bL_{\X,i}\bL_{i,\X} \right\} \\
  &= k - \Tr \left\{ \left( \bU_{\X,:} \bU^{\top}_{:,\X} \right)^{-1} \sum_{i} \left( \bU_{\X,:} \bU^{\top}_{:,i} \right)
    \left( \bU_{i,:} \bU^{\top}_{:,\X} \right) \right\} \\
  &= k - \Tr \left\{ \left( \bU_{\X,:} \bU^{\top}_{:,\X} \right)^{-1} \left( \bU_{\X,:}  \bU^{\top}_{:,\X} \right) \right\} \\
  &= k - |\X| \numberthis   \label{eq:removing-item-proj}
\end{align*}

To simplify what follows, we change the setting a bit and look at \emph{ordered sets}: we sample $\X$ from a DPP, give it a random order (one of $k!$), and thus obtain a vector $\mathbf{x}$. Instead of computing $p(\ba \subseteq \X)$, we compute $p(x_{1} = \alpha_{1}, x_{2} = \alpha_{2}, \ldots, x_{m} = \alpha_{m})$, for one particular ordering of $\ba$. The two probabilities are related: \[ p(x_{1} = \alpha_{1}, x_{2} = \alpha_{2}, \ldots, x_{m} = \alpha_{m}) = p(\ba \subseteq \X) \frac{(k-m)!}{k!} \]

Further, note that the probability mass function for $\mathbf{x}$ is just $p(\mathbf{x}) = \frac{p(\X)}{k!}$. Let us now compute $P_{\ba} = p(x_{1} = \alpha_{1}, x_{2} = \alpha_{2}, \ldots, x_{m} = \alpha_{m})$.
\begin{eqnarray}
  \label{eq:marginal-prob-proj}
  P_{\ba} &= \sum_{x_{m+1} \ldots x_{k}} p \left(\mathbf{x} = [ \alpha_{1},\ldots,\alpha_{k}, x_{m+1}, \ldots, x_{k}]  \right) \\
       &= \frac{1}{k!} \sum_{x_{m+1} \ldots x_{m}} \det \bL_{\left\{ \ba,x_{m+1},\ldots,x_{k}  \right\}}
\end{eqnarray}

Note that since the determinant equals 0 if there are repeated elements, it does not matter if we include repeated elements in the sum. Applying eq. (\ref{eq:det-bordered}), we obtain:
\begin{equation}
  \label{eq:mar-prob-proj-2}
  P_{\ba} = \frac{1}{k!} \sum_{ \mathbf{z}, x_{k}} \det \bL_{\left\{ \ba, \mathbf{z}  \right\}} \left(\bL_{x_{k}} - \bL_{x_{k},\left\{ \ba,\mathbf{z} \right\}} \bL_{\left\{ \ba,\mathbf{z} \right\}}^{-1}\bL_{\left\{ \ba,\mathbf{z}  \right\},x_{k}} \right) 
\end{equation}
where we have replaced $x_{m+1} \ldots x_{k-1}$ with a vector $\mathbf{z}$ of length $k-m-1$.
Next, we sum over $x_{k}$, applying eq. (\ref{eq:removing-item-proj}):
\begin{align}
  \label{eq:mar-prob-proj-2}
  P_{\ba} &= \frac{1}{k!} \sum_{ \mathbf{z}} \det \bL_{\left\{ \ba, \mathbf{z}  \right\}} \sum_{x_{k}} \left(\bL_{x_{k}} - \bL_{x_{k},\left\{ \ba,\mathbf{z} \right\}} \bL_{\left\{ \ba,\mathbf{z} \right\}}^{-1}\bL_{\left\{ \ba,\mathbf{z}  \right\},x_{k}} \right) \\
          &= \frac{1}{k!} \sum_{ \mathbf{z}} \det \bL_{\left\{ \ba, \mathbf{z}  \right\}} (k - (k-1))
\end{align}
Doing this recursively for $x_{k-1}, x_{k-2}, \ldots$ up to $x_{m+1}$, we obtain:
\begin{align}
  \label{eq:mar-prob-proj-3}
  P_{\ba} &= \frac{1}{k!} \left(  \det \bL_{\ba} \right) (k-m)(k-m-1) \ldots 1 \\
          &= \frac{(k-m)!}{k!} \det \bL_{\ba} 
\end{align}
which in turns implies: 
\begin{equation}
  \label{eq:marginal-ker-proj}
p(\ba \subseteq \X) = \det \bL_{\ba} 
\end{equation}

\subsection{Reduction to diagonal DPPs}
\label{sec:reduction_diag_proof}

Since  $k$-DPP are mixtures of diagonal $k$-DPPs we can write 
\begin{eqnarray}
  \label{eq:incl-prob-mixture}
p(  \ba \subseteq \X \big|k)= E[ p( \ba\subseteq \X)_\Y ]
\end{eqnarray}
where the outer expectation is over diagonal $k$-DPPs $\Y$, and
\begin{equation}
  \label{eq:incl-prob-proj}
  p( \ba\subseteq \X)_\Y = \det \left(  L(\Y)_{\ba} \right)
\end{equation}
with $L(\Y) = \mathbf{U}_{\Y,:}\mathbf{U}_{:,\Y}^\top$.

Let $\bY$ be a diagonal matrix with $y_{ii} = 1 $ if $i\in \Y$, and 0 otherwise. Then we may express the marginal probability of inclusion as:
\begin{align*}
  p(\ba \subseteq \X ) &=   E[\det \left( (\bU \vY \bU^\top)  _{\ba} \right) ] \\
                               &= E[\det \left( (\bU_{\ba,:} \vY \bU^\top_{:,\ba})  \right) ] \numberthis  \label{eq:marginal-prob-cb}
\end{align*}
where the expectation is over $\Y$. The determinant inside the expectation can be computed using the Cauchy-Binet theorem, giving:
\begin{align*}
p \left(\ba \subseteq \X \right) &= E[\sum_{\bb\slash |\bb|=|\ba|} \det \bU_{\ba \bb} \det (\vY \bU^\top)_{\bb \ba }]\\
&= E[\sum_{\bb\slash |\bb|=|\ba|} \det \bU_{\ba \bb}  \bU_{\ba \bb}^\top\prod_{i\in \bb} y_i] \\
                                 &= \sum_{\bb\slash |\bb|=|\ba|} p(\bb \subseteq \Y) \det \bU_{\ba \bb}  \bU_{\ba \bb}^\top \numberthis                                       \label{eq:inclusion-prob-general}
\end{align*}
In particular, for singletons $|\ba| = 1$ we recover  the inclusion probability of order 1 given in sec. \ref{sec:inclusion_kDPPs}.

Suppose now we have to measure the total variation distance between an inclusion probability of a $k$-DPP ($\pi$) and a DPP  approximation of it ($\tilde{\pi}$). Recalling that each is a mixture of diagonal DPPs with inclusion measure $\rho$ and $\tilde{\rho}$, we write
\begin{eqnarray*}
D_{m}(\pi, \widetilde{\pi} ) &=&
\frac{1}{{k \choose m}}\sum_{\ba} \Big| \pi(\ba) - \widetilde{\pi}(\ba) \Big)\Big| \\
&=& \frac{1}{{k \choose m}}\sum_{\ba} \Big{|}  \sum_{\bb} \det \bU_{\ba \bb}  \bU_{\ba \bb}^\top \Big(  \rho(\bb) - \widetilde{\rho}(\bb) \Big) \Big| \\
&\leq& \frac{1}{{k \choose m}}\sum_{\bb} \Big{|}  \rho(\bb) - \widetilde{\rho}(\bb) \Big|
 \sum_{\ba} \det \bU_{\ba \bb}  \bU_{\ba \bb}^\top 
 \end{eqnarray*}
 But $\sum_{\ba} \det \bU_{\ba \bb}  \bU_{\ba \bb}^\top = \sum_{\ba} \det (\bU^\top)_{ \bb\ba} \bU_{\ba \bb} =\det(\bU^\top \bU)_{\bb \bb}=1$. 
Thus 
$D_{m}(\pi, \widetilde{\pi} ) \leq D_{m}(\rho, \widetilde{\rho} )$, proving Lemma \ref{lm:diag-reduction}.

%Going back to DPPs, in a limit where $n$ grows and $k$ is fixed we expect the distribution of $|X|$ to tend to a Poisson distribution, rather than a Gaussian one 

\subsection{Computing an asymptotic expansion for diagonal inclusion probabilities}
\label{sec:asymptotics-inclusion-proof}

To derive the $O(1)$ and $O(n^{-1})$ terms in the inclusion probabilities, we take the exact expression (eq. \ref{eq:inclusion-prob-diag}) and inject the saddlepoint approximation (eq. (\ref{ eq:ESP-saddlepoint}), which yields: 

\begin{equation}
  p_{k}( \prod_{j \in \ba} z_{j}=1 ) =  \left(\prod_{i \in \ba} \frac{\lambda_{i}}{1+\lambda_{i}}\right) \frac{\sqrt{\psi''(\nus)}}{\sqrt{\psi''(\nus_{\ba}) - \psi_{\ba}''(\nus_{\ba})}}
  \exp \left( \psi(\nus_{\ba}) - \psi(\nus) -  \psi_{\ba}(\nus_{\ba}) + k \nus - (k-m) \nus_{\ba} \right)
\label{eq:inclusion-saddle}
\end{equation}
where $\psi_{\ba} = \sum_{i\in\ba} \psi_{i} $, $\psi'_{\ba} = \sum_{i\in\ba} \psi'_{i} $ and so on. The relative error in this approximation is of order  $O(n^{-2})$ \footnote{The reason the relative error is $O(n^{-2})$ is that we take a ratio of $O(n^{-1})$ errors that are actually the same up to a $O(n^{-1})$ term. Intuitively, the relative errors in the saddlepoint approximation of $e_k(\bl)$ and $e_{k-m}(\bl_{-\ba})$ are almost the same, and thus most of the error cancels when we take the ratio.} and we neglect it from now on. To get the $O(1)$ and $O(n^{-1})$ terms, we use a perturbation approach, where we treat $\epsilon = n^{-1}$ as a (scalar) perturbation parameter. The reason we have to use a perturbation approach is for lack of an analytical expression for the saddlepoint parameter $\nus$. 
To do so, we split eq (\ref{eq:inclusion-saddle}) into three terms: 
$$ A = \left(\prod_{i \in \ba} \frac{\lambda_{i}}{1+\lambda_{i}}\right)$$
$$ B = \frac{\sqrt{\psi''(\nus)}}{\sqrt{\psi''(\nus_{\ba}) - \psi_{\ba}''(\nus_{\ba})}} $$
$$ C = \exp \left( \psi(\nus_{\ba}) - \psi(\nus) -  \psi_{\ba}(\nus_{\ba}) + k \nus - (k-m) \nus_{\ba} \right)  $$

We shall find series for $B$  and $C$ of the form $B = b_{0} +  \epsilon b_{1} + \epsilon^{2} b_{2} + \ldots$, $C =  \exp(c_{0} +  \epsilon c_{1} + \epsilon^{2} c_{2} + \ldots)$. We will see that here $b_{0} = 1$. 
These series can in turn be used to obtain approximations of order $\epsilon^{0}$ and $\epsilon^{1}$ to the product $ABC$, namely:
\begin{equation*}
p_{k}( \prod_{j \in \ba} z_{j}=1 ) = A ( \exp(c_{0}) \left( 1 + \epsilon (c_{1} + b_{1})  + O(\epsilon^{2}) \right)
\end{equation*}
We note  $\psib=\frac{1}{n}\psi$, $\psib_{\ba}=\frac{1}{m}\psi_{\ba}$, $r = \frac{k}{n}$.
To obtain our perturbation series, we begin with the perturbed solution to the saddlepoint equation $\nus_{\ba}$, defined by:
\begin{align*}
  \nus_{\ba} &= \argmin_{\nu} \psi(\nu) - \psi_{\ba}(\nu) - (k-m)\nu \\
             &= \argmin_{\nu} \psib(\nu) - r\nu + m \epsilon ( \nu -  \psib_{\ba}(\nu)) \\
               &= \argmin_{\nu} f(\nu, \epsilon) \numberthis   \label{eq:nu-perturbation}
\end{align*}

Define the ansatz $\nus(\epsilon) = \argmin f(\nu, \epsilon) = \nu_{0}+\epsilon \nu_{1} + \epsilon^{2} \nu^{2} + \ldots$. From the saddlepoint equation we obtain:
\begin{align*}
  \psib'(\nus) - r + m \epsilon ( 1 -  \psib'_{\ba}(\nus)) = 0
\end{align*}
At order $\epsilon^{0}$, the equation implies:
\begin{equation}
  \label{eq:nu0}
\psib'(\nu_{0}) - r = 0
\end{equation}
so that $\nu_{0}$ equals the saddlepoint of the unperturbed problem.
At order $\epsilon^{1}$, we obtain:
\begin{equation}
  \label{eq:nu1}
\psib''(\nu_{0})\nu_{1} - m\left(1-\psib'_{\ba}(\nu_0) \right) = 0
\end{equation}

Further orders are not needed for our purposes. 

% At order $\epsilon^{2}$, we obtain:
% \begin{equation}
%   \label{eq:nu2}
% \psib''(\nu_{0})\nu_{2}+ \frac{1}{2}\psib^{(3)}(\nu_{0})\nu_{1}^{2} - m\psib''(\nu_0)\nu_1  = 0
% \end{equation}

We are now ready to insert these equations back into \eqref{eq:inclusion-saddle}. We begin with the exponential part. 
\begin{align}
  \label{eq:Cpert}
  C(\epsilon) &= \exp \left( n \left( f(\nus_{\ba},\epsilon) - \psib(\nus) - r\nus \right)  \right) \\
    &= \exp \left( n \left( f(\nu_{0} + \epsilon \nu_{1} + \epsilon^{2} \nu_{2} + \ldots, \epsilon) - \psib(\nu_{0}) - r\nu_{0} \right)  \right)  
\end{align}
We proceed with a similar perturbation for $f(\nus(\epsilon),\epsilon)$, $f = f_{0}+\epsilon f_{1} + \epsilon^{2} f_{2} + \ldots $
\begin{align}
  \label{eq:fpert-zero}
  f_{0} &= \psib(\nu_0) - r\nu_{0} \\
  f_{1} &= \psib'(\nu_0)\nu_{1} - r\nu_{1} + m\left(\nu_{0} - \psib_{\ba}(\nu_{0})\right) \\
        &=  m\left(\nu_{0} - \psib_{\ba}(\nu_{0})\right) \\
  f_{2} &= \psib'(\nu_0)\nu_{2} - r\nu_{2} + \frac{1}{2} \psib''(\nu_{0})\nu_{1}^{2} + m \left( \nu_{1} - \psib_{\ba}'(\nu_{0})\nu_{1} \right) \\
          &= -\frac{\nu_{1}^{2}}{2}\psib''(\nu_{0}) \\
\end{align}
where we have made use of eq. \eqref{eq:nu0} and  \eqref{eq:nu1}.
Inserting (\ref{eq:fpert-zero}) into (\ref{eq:Cpert}), we find
\begin{equation}
  \label{saddlepoint-exponential-part}
C(\epsilon) = \exp \left( f_{1} + \epsilon f_{2} + O(\epsilon^{2}) \right)
  \end{equation}
  
We now proceed with the other factor of  \eqref{eq:inclusion-saddle}, $B$, involving a ratio of square roots:
\begin{equation}
  \label{eq:saddlepoint-sqrt-part}
  B(\epsilon) = \frac{\sqrt{\psi''(\nus)}}{\sqrt{\psi''(\nus_{\ba}) - \psi_{\ba}''(\nus_{\ba})}}
   = \frac{\sqrt{\psib''(\nu_{0})}}{\sqrt{\psib''(\nus_{\ba}) - m \epsilon \psib_{\ba}''(\nus_{\ba})}}
\end{equation}
Note that
\begin{align}
  \sqrt{\frac{a}{a-\epsilon}} = \sqrt{\frac{1}{1-\frac{\epsilon}{a}}} = \sqrt{\left( 1+\frac{\epsilon}{a}+O(\epsilon^{2}) \right)} = 1+\frac{\epsilon}{2a}+O(\epsilon^{2})
\end{align}
It is immediate from the above that $B=1 + O(\epsilon)$, and that therefore: 
\begin{align*}
  p_{k}( \prod_{j \in \ba} z_{j}=1 ) &= \left(\prod_{i \in \ba} \frac{\lambda_{i}}{1+\lambda_{i}}\right) \exp \left( f_{1} + O(\epsilon) \right) \\
  &= \left(\prod_{i \in \ba} \frac{\lambda_{i}}{1+\lambda_{i}}\right) \exp \left( m\left(\nu_{0} - \psib_{\ba}(\nu_{0})\right) + O(\epsilon) \right) \\
   &= \left( \prod_{i \in \ba} \frac{\lambda_{i} \exp(\nu_{0})}{1+\lambda_{i}\exp(\nu_{0})}  \right) \left( 1 + O\left(\frac{1}{n} \right) \right) \numberthis   \label{eq:inclusion-prob-limit}
\end{align*}

For numerical purposes it is interesting to obtain the $O(\epsilon)$ term, which requires the first-order approximation to $B$:
\begin{equation}
  \label{eq:sqrt-ratio-first-order}
  B(\epsilon) = 1 - \epsilon \frac{1}{ 2 \psib''(\nu_{0})}  \left( \psib^{(3)} ( \nu_{0}) \nu_{1} - m \psib_{\ba}'' (\nu_{0}) \right) + O \left( \epsilon^{2} \right)
\end{equation}

This completes the proof of lemma \ref{lm:inclusion-probs}.

\subsection{An easy-to-compute correction to the $O(n^{-1})$ approximation}
\label{sec:corrected_approx}

One way to get an improved estimate of inclusion probabilities is to compute the $O(n^{-1})$ term in the saddlepoint expansion, and that is what we recommend for first-order inclusion probabilities. It is harder to use when $m>1$, and in this section we describe a correction that is easy to compute and yields interesting insights into the approximation.
From lemma \ref{lm:sums-inclusion-prob}, we know what the sum of the inclusion measure for a $k$-DPP over all sets of size $m$ should equal ${ k \choose m}$, while for a DPP with marginal kernel $\bK$ it equals:
\begin{equation}
  \label{eq:sums-of-inclusions}
 \sum_{\ba, |\ba|=m} \det \bK_{\ba} = e_{m}(\bK) 
\end{equation}
the m-th ESP of matrix $\bK$.
In the matched DPP $\bK$ equals $\left( \bI + e^{\nu}\bL \right)^{-1}e^{\nu}\bL$, and the eigenvalues of $\bK$ are $\eta_{i} = \frac{e^{\nu}\lambda_{i}}{1+ e^{\nu} \lambda_{i}}$.
What eq. \eqref{eq:sums-of-inclusions} implies is that for the approximation to be exact at order $m$, we need to have $e_{m}(\be) = {k \choose m}$.
One shows easily that this is true if and only if $\be$ has exactly $k$ entries that equal 1, and the rest are all zero, which happens to be just the case described in result \ref{result:max-rank-dpp}.
\begin{lemma}
  Let $\be \in [0,1]^{n}$, with $\sum \eta_{i} = k$, and let $m < k$. Then ${ k \choose m} \leq e_{m}(\be) \leq { n \choose k} (\frac{k}{n})^{m}$ 
\end{lemma}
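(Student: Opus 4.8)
The plan is to view $e_m$ as a function on the polytope $P=\{\be\in[0,1]^n:\sum_i\eta_i=k\}$ that is affine in each coordinate separately, and to bound it by ``two-coordinate exchange'' moves that keep $\be\in P$. The basic tool is the splitting identity: for any $i\neq j$,
\begin{equation*}
  e_m(\be)=e_m(\be_{-ij})+(\eta_i+\eta_j)\,e_{m-1}(\be_{-ij})+\eta_i\eta_j\,e_{m-2}(\be_{-ij}),
\end{equation*}
in which all the ESPs on the right are nonnegative since the $\eta_\ell$ are. If we hold $\eta_i+\eta_j=s$ fixed and vary only the pair, the sole term that moves is $\eta_i\eta_j$, with nonnegative coefficient $e_{m-2}(\be_{-ij})$; since $\eta_i\eta_j$ is maximal at $\eta_i=\eta_j=s/2$ and decreases monotonically as the gap $|\eta_i-\eta_j|$ grows, pushing the pair towards equality cannot decrease $e_m$, and pushing the pair apart cannot increase it.

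For the lower bound, I would start from an arbitrary $\be\in P$ and, while two coordinates $\eta_i,\eta_j$ lie in the open interval $(0,1)$, push them apart (keeping $\eta_i+\eta_j$ fixed) until one of the two reaches $0$ or $1$. By the observation above this does not increase $e_m$, and it strictly lowers the number of coordinates in $(0,1)$, so after finitely many moves all coordinates lie in $\{0,1\}$. Since $\sum_i\eta_i=k\in\mathbb{N}$, exactly $k$ of them equal $1$, and for such a $0/1$ vector every size-$m$ product that meets a zero vanishes, so $e_m(\be)=\binom{k}{m}$. Hence $e_m(\be)\ge\binom{k}{m}$ on all of $P$, the minimum being attained at each $0/1$ vector with $k$ ones -- which is precisely the rank-$k$ projection situation of result \ref{result:max-rank-dpp}.

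For the upper bound I would invoke Maclaurin's inequality for the nonnegative numbers $\eta_1,\dots,\eta_n$: writing $p_j:=e_j(\be)/\binom{n}{j}$ one has $p_1\ge p_2^{1/2}\ge\cdots\ge p_n^{1/n}$, so $p_m\le p_1^m$, i.e.
\begin{equation*}
  e_m(\be)\ \le\ \binom{n}{m}\Bigl(\tfrac{e_1(\be)}{n}\Bigr)^{\!m}=\binom{n}{m}\Bigl(\tfrac{k}{n}\Bigr)^{\!m}.
\end{equation*}
(Alternatively, the reverse move -- repeatedly replacing $\eta_i$ and $\eta_j$ by their common mean $\tfrac12(\eta_i+\eta_j)$, which stays in $[0,1]$ because $k/n\le1$ and does not decrease $e_m$ -- drives $\be$ to the balanced vector $(k/n,\dots,k/n)$, where $e_m=\binom{n}{m}(k/n)^m$, so this is in fact the sharp constant.) The stated form then follows from the elementary monotonicity $\binom{n}{m}\le\binom{n}{k}$, valid in the relevant range $m\le k\le n-m$.

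The only delicate point is the bookkeeping in the exchange argument -- checking that each move keeps $\be$ in $P$, moves $e_m$ in the claimed direction, and makes monotone progress toward a vertex of $P$ so that the process terminates -- and, for the upper bound, that it is cleaner to cite Maclaurin than to chase the smoothing all the way to the balanced point. Conceptually there is no real obstacle: the extreme values of a separately-affine symmetric function on $P$ sit at the ``maximally unbalanced'' vertex (value $\binom{k}{m}$) and at the ``perfectly balanced'' point (value $\binom{n}{m}(k/n)^m$), and both inequalities become routine once this is recognised.
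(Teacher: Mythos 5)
Your proof is correct, and it takes a genuinely different route from the paper's. For the lower bound the paper runs a Lagrangian/KKT analysis on the polytope $\{\be\in[0,1]^n:\sum\eta_i=k\}$: stationarity forces all coordinates lying strictly between $0$ and $1$ to share a common value $a$, and the identities $e_m([0\ \bb])=e_m(\bb)$ and $e_m([\bb\ 1])=e_{m+1}(\bb)+e_m(\bb)$ then show $a$ should be driven to $0$, so the minimizers are the $0/1$ vectors with $k$ ones. Your two-coordinate exchange reaches the same vertices by purely elementary means: the splitting identity makes $e_m$ an affine function of $\eta_i\eta_j$ with nonnegative coefficient once $\eta_i+\eta_j$ is frozen, so spreading a pair apart can only decrease $e_m$, and the count of coordinates in $(0,1)$ strictly drops at each move, guaranteeing termination at a $0/1$ point (integrality of $k$ handles the possible single leftover coordinate). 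For the upper bound the paper asserts concavity of $e_m$ and identifies the balanced point $\eta_i=k/n$ via KKT; you instead invoke Maclaurin's inequality, which yields the same value ${n \choose m}(k/n)^m$ with no appeal to concavity --- a welcome simplification, since $e_m$ is Schur-concave but not concave on the positive orthant (only $e_m^{1/m}$ is), so the paper's one-line justification of the maximum is the shakier of the two. One bookkeeping point: both your argument and the paper's own evaluation at the balanced point give the upper bound ${n \choose m}(k/n)^m$, and the ${n \choose k}$ in the statement is evidently a typo for ${n \choose m}$ (this is the reading under which the subsequent remark ${n \choose m}(k/n)^m\asymp{k \choose m}$ holds). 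Your attempted bridge to the literal statement via ${n \choose m}\le{n \choose k}$ is rightly flagged as valid only for $k\le n-m$; outside that range the literal statement is simply false (take $k=n$, $m=1$), so no argument could close that gap, and your proof of the corrected bound is the substantively right thing to do.
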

\begin{proof}
  We seek the extrema of $e_{m}(\be)$ under the equality constraint $\sum \eta_{i} = k$ and the inequality constraints $0 \leq \eta_{i} \leq 1$. Maximisation is easy $e_{m}(\be)$ is a concave function (as a consequence of Schur concavity, \citet{Sra:LogIneqESP}), and we have linear constraints, so that any maximum is unique. 
  The Lagrangian equals:
  \begin{equation}
    \label{eq:lagrangian}
   \mathcal{L}(\be,\nu,\bm{\gamma}),\bm{\delta}) = e_{m}(\be) - \nu \left( \sum \eta_{i} - k \right) - \bm{\gamma}^{\top}  \be  - \bm{\delta}^{\top} \left( \be - 1 \right)
  \end{equation}
  and the Karush-Kuhn-Tucker conditions imply that for all $i$:
  \begin{align}
    \label{eq:KKT}
    \frac{\partial}{\partial \eta_{i}} e_{m}(\be) = e_{m-1}(\be_{-i}) =  \nu + \gamma_{i} + \delta_{i} \\ 
    \gamma_{i} \eta_{i} = 0  \\ 
    \delta_{i} (\eta_{i}-1) = 0  \\
  \end{align}
  The solution where $\eta_{i} = \frac{k}{n}$ for all $n$ only has inactive constraints, and is a maximum.
  Finding minima requires a bit more work. Let us consider a potential solution $\be$, and split into three consecutive parts: the zero values (active constraints under the $\gamma$ multiplier), the values contained above zero and below one (inactive constraints), and the values equal to one. The KKT conditions imply that for all $j$ such that the $j$-th constraint is inactive, $e_{m-1}(\eta_{-j}) = \nu$, meaning that removing any $0 < \eta_{j} < 1$ has the same effect, which implies that all these values are the same. Consequently, we can reparametrise the solution as
  \[ \be = [ 0, 0, \ldots, 0, a, a, \ldots, a, 1, 1, \ldots, 1 ]\]
  where $0 < a < 1$. Since $e_{m}$ is invariant to permutations there is no loss of generality. 
  Next, notice that $e_{m}([0 \bb]) = e_{m}(\bb)$ for all $m$. Further:
  \begin{equation}
    \label{eq:esp-concatenate-one}
    e_{m}([\bb 1]) = e_{m+1}(\bb)+e_{m}(\bb)
  \end{equation}
so that $e_{m}([ 0, 0, \ldots, 0, a, a, \ldots, a, 1, 1, \ldots, 1 ])$ is just a weighted sum of elementary symmetric polynomials of the vector $[a, a, \ldots, a]$, so that $a$ needs to be as small as possible under the constraints. The minima must therefore all have $k$ values equal to one, and the rest zero. Evaluating $e_{m}$ at the two extrema yields the bound. 
\end{proof}

The least favorable case is therefore when $\bL$ has a flat spectrum (which thankfully should not happen), but even then asymptotic equivalence holds: one can verify that $ { n \choose k} (\frac{k}{n})^{m} \asymp { k  \choose m }$. However, at finite orders, one can improve the approximation by making sure it sums to the right quantity: i.e., approximate inclusion probabilities via:
\begin{equation}
  \label{eq:est-improved}
\tilde{\pi}_{corrected}(\ba) = \frac{{ k  \choose m }}{e_{m}(\be)} \det \bK_{\alpha}
\end{equation}
When $m=1$ the correction does nothing (the correction factor equals 1), but at higher orders we have found that it can sometimes reduce relative error by a factor of 10.

\end{document}